\newtheorem{thm}{Theorem}[section]
 \newtheorem{lem}[thm]{Lemma}
 \newtheorem{defn}[thm]{Definition}
 \newtheorem{rem}[thm]{Remark}
\def\IR{\mathbb{R}}
\def\IN{\mathbb{N}}
\def\IP{\mathbb{P}}
\def\la{\lambda}
\def\de{\delta}
\def\ol{\overline}
\def\pl{\partial}
\def\no{\nonumber}
\def\be{\begin{equation}}
\def\ee{\end{equation}}
\def\bge{\begin{eqnarray}}
\def\ege{\end{eqnarray}}
\def\bgee{\begin{eqnarray*}}
\def\egee{\end{eqnarray*}}
 \numberwithin{equation}{section}
\begin{document}

\textsf{}
\smallskip
\title[A non-local stochastic parabolic problem]
{Finite-time blow-up  of a non-local stochastic parabolic problem}

 \author{Nikos I. Kavallaris}
\address{
 Department of Mathematics, University of Chester, Thornton Science Park
Pool Lane, Ince, Chester  CH2 4NU, UK
}

\email{n.kavallaris@chester.ac.uk}

\author{Yubin Yan}
\address{
 Department of Mathematics, University of Chester, Thornton Science Park
Pool Lane, Ince, Chester CH2 4NU, UK
}

\email{y.yan@chester.ac.uk}

\begin{abstract}
The main aim of the current work is the study of the conditions under which (finite-time) blow-up  of a non-local  stochastic parabolic problem occurs. 
 We first establish the existence and  uniqueness of the local-in-time  weak solution for such  problem.
The first part of the manuscript deals with the investigation of the conditions which guarantee the occurrence of noise-induced blow-up. In the second part we first prove the  $C^{1}$-spatial regularity of the solution. Then,  based on this regularity result, and using  a strong positivity result  we derive, for first in the literature of SPDEs, a Hopf's type boundary value point lemma. The preceding results together with Kaplan's eigenfunction method are then employed to provide a (non-local) drift term induced blow-up result. In the last part of the paper, we present a method which provides an upper bound of the probability of (non-local) drift term induced blow-up.
\end{abstract}

\subjclass{Primary 	60H15, 35B44 ; Secondary 34B10 , 35B50, 35B51}

\keywords{Non-local, Stochastic Partial Differential Equations, Maximum principle, Blow-up, Exponential Brownian Functionals}

\date{\today}
\maketitle
\section{Introduction}
In the current work we consider the following non-local stochastic parabolic problem
\bge
&&\frac{\pl u}{\pl t}=\Delta u+F(u)+\sigma(u)\; \partial_t W(x,t),\; (x,t)\in D_T:=D\times(0,T),\qquad\label{eq1}\\
&& u(x,t)=0,\quad (x,t)\in \Gamma_T:=\pl D\times (0,T),\;\label{eq2}\\
&& u(x,0)=\xi(x),\quad x\in D\label{eq3},
\ege
where $T>0$ denotes the maximum existence time and $D\subset \IR^d,\;d\geq 1,$  is a bounded domain with smooth boundary, whilst $\Delta$ denotes the Laplacian operator.    Here  the non-local reaction (drift) term $F(u)$ is defined by
\begin{equation} \label{F1a}
 F(u): = \frac{\lambda f(u)}{\Big ( \int_{D} f(u) \, dx \Big )^{q}}, \quad q>0,
\end{equation}
for some positive constant $\la$   and  $f(u)$ is a  locally Lipschitz and nonnegative function. The diffusion  term $\sigma(u)$ is also assumed to be nonegative and Lipschitz continuous. Furthermore $\partial_t W(x,t)$ denotes by convention the formal time derivative of the Wiener process $\{W(x,t),\; x\in D,\; t\geq 0\}$ in a complete probability space $(\Omega,\mathcal{F}, \IP)$ with filtration $\left(\mathcal{F}_t\right)_{t\in[0,T]}$ generated by $W.$ The initial value $\xi$ is a $\mathcal{F}_{0}$-measurable variable in some suitable spaces introduced later.

Let $H= L^{2} (D)$ endowed  with the norm $\| \cdot \|$. The solution $u(t)\equiv u(x,t;\omega)$ of \eqref{eq1}-\eqref{eq3} should be understood as an $H$-valued stochastic process  on $[0, T]$ for some $T>0$ any realization $\omega\in \Omega.$  Thus, questions like local-in-time  existence and uniqueness of a solution of \eqref{eq1}-\eqref{eq3} arise and can be tackled with the approach developed in \cite{dazab}. Questions regarding the  temporal and spatial regularity of  \eqref{eq1}-\eqref{eq3}, which  are very interesting issues,  are addressed in the current work. In particular, for proving the occurrence of finite-time (non-local) drift induced blow-up  we need at least a $C^{1}-$spatial regularity result, which was only quite recently obtained for the general  quasilinear SPDEs, \cite{debdehof}, and we also revive in the current work for our semilinear problem \eqref{eq1}-\eqref{eq3}, cf. Theorem \ref{t4}.

We are strongly motivated to study problem \eqref{eq1} -\eqref{eq3} since  this kind of non-local stochastic problem is associated with various industrial processes  (e.g.  Ohmic heating in food sterilization \cite{KS18,ktbn, l1,l2,tz} and shear banding formation in high strain metals \cite{bt1,bt2,k15}) as well as with biological processes (e.g. chemotaxis phenomenon \cite{ks,KS18, w}) and  statistical mechanics  approaches\cite{kn}, where the multiplicative noise term $\sigma(u)\; \partial_t W(x,t)$ represents the existence of external perturbations or a lack of knowledge of certain physical parameters.
The occurrence of multiplicative noise terms is natural when one considers noisy control systems, see \cite{bl}, and its importance  is well-known in physics and  biology. Many experimental or numerical observations of self-organized behavior or phase transitions arising out of such noises have been recorded in \cite{h00,ssg07}. For a detailed construction of a mathematical model of the form \eqref{eq1} -\eqref{eq3} arising in shear banding formation in metals the interested readers can see \cite{k15}, whilst a stochastic model arising in MEMS technology is formulated and investigated  in \cite{k16}.

\section{State of the art}
The current work mainly focuses on the phenomenon of finite-time blow-up,  which in the probabilistic sense might be associated with the expectation of the solution $u$ of \eqref{eq1}-\eqref{eq3}  becoming infinitely big in finite time. Such a singular behaviour is definitely very interesting from the mathematical point of view, however in many applications in engineering and biology it is also correlated with some destructive behaviour  of the associated mathematical models. Thus the investigation of the conditions under which  such finite-time blow-up occurs becomes vital. So in the current paper we try to provide a thorough study of this issue for the non-local model \eqref{eq1} -\eqref{eq3}. Before stating and proving our main results, let us review the main blow-up results available in literature. Fundamental results on the blow-up of stochastic reaction diffusion equations were first obtained by Chow ( \cite{cho1, cho2}) but only for the local version of problem \eqref{eq1} -\eqref{eq3}, i.e. when $q=0$ in \eqref{F1a}. Chow's method actually implies finite-time blow-up  in the mean $L^p-$sense for  $p>1,$  see also Definition \ref{mmk}.  Lv and Duan in \cite{ ld},  following an approach  similar to \cite{cho1, cho2} and  again for the local problem, provide a further insight on the impact of the noise term in the blow-up phenomenon by describing the competition between the nonlinear reaction term and noise term.  Moreover,   Foondun et al.\cite{fln}, by extending Chow's ideas,  proved the nonexistence of global solutions for the
Cauchy problem, i.e. when $D=\IR^d,$  even for a fractional Laplacian operator.  Dozzi and L\'{o}pez-Mimbela  in \cite{dlm10}, by using a somewhat different approach, they proved a finite time blow-up result  for the local problem and for a superlinear reaction term when $\sigma(u)=u.$  Besides, their method also provides an upper bound of the probability of blow-up.

Thorough research has been undertaken regarding the study of finite-time blow-up  of deterministic, i.e. when $\sigma\equiv 0,$  reaction diffusion equations since the seminal work of Fujita \cite{fuj66, fuj69}. In particular, regarding  the deterministic  non-local problem
\bge
&&\frac{\pl u}{\pl t}=\Delta u+\frac{\lambda f(u)}{\Big ( \int_{D} f(u) \, dx \Big )^{q}},\quad (x,t)\in D_T,\;q>0,\qquad\label{ndp1}\\
&& u(x,t)=0,\quad (x,t)\in \Gamma_T,\;\label{ndp2}\\
&& u(x,0)=\xi(x), x\in D, \label{ndp3}
\ege
the finite-time blow-up, i.e. the occurrence of $T<\infty$  such that $$\limsup_{t\to T} ||u(\cdot,t)||_{\infty}=\infty,$$
where $\| \cdot \|_{\infty}$ denotes  the norm in $L^{\infty}(D),$ has been investigated in detail in \cite{bt2, kavtza, KN07, ks,l1,l2,tz}.  More precisely, the authors in \cite{kavtza} proved, that  for any $0<q<1 $ the solution $u$ of \eqref{ndp1}- \eqref{ndp3} on a convex domain $D$  blows in finite time, either for big values of the control parameter $\la$ or for big enough initial data $\xi(x),$ provided that $f(s)$ is a positive, increasing, convex function for any $s \in \mathbb{R}$ satisfying also the following conditions
\bgee
[f^{1-q}(s)]''\geq 0\quad\mbox{for}\quad s\in \IR \quad\mbox{and}\quad\int_{b}^{\infty} \frac{ds}{f^{1-q}(s)}  < \infty, \quad\mbox{for any} \, b\in \IR.
\egee
However, to the best of our knowledge there are no any blow-up  results for the stochastic non-local problem \eqref{eq1}-\eqref{eq3}. Hence, the current paper initiates an investigation in this direction. Our main techniques stem from the theory of nonlinear PDEs; in particular for our investigation we basically use ideas introduced and developed in \cite{ cho1,cho2, dlm10, kavtza}.

The structure of the paper is as follows. In the first part of Section \ref{fs}, we establish the existence and uniqueness of  a local-in-time solution of  the stochastic problem  \eqref{eq1}-\eqref{eq3}. The second part of Section \ref{fs} deals with the analysis of noise (diffusion) term induced  blow-up. Section \ref{psk} focuses on the demonstration of the non-local reaction (drift) induced  blow-up. To this end we first derive the $C^{1}$-spatial regularity for the solution $u$, and then as a by-product we prove a Hopf's type  lemma for some specific stochastic problems. Notably, as far as we know it is the first time in the context  of SPDEs that this  key result is stated and proven.
Next, making use of Hopf's lemma we derive an estimate of the solution $u$ of  \eqref{eq1}-\eqref{eq3},  near the boundary $\pl D$ in conjuction with the moving plane method, \cite{bc,gnn,nst}, adjusted in the context of SPDEs. Then the  latter key estimate in conjuction with Kaplan's eigenfunction method, \cite{kaplan}, lead to the  proof of  the desired  reaction (drift) induced  blow-up, which is analogous to blow-up result of the deterministic problem conisidered in \cite{kavtza}. Finally, Section \ref{psk} concerns with the derivation of an upper estimate of the probability of blow-up for the special case $\sigma(u)=u,$  via the method introduced in \cite{dlm10}.

\section{ Noise term induced  blow-up} \label{fs}
In the current section we investigate the circumstances under which finite-time  blow-up of the system \eqref{eq1}-\eqref{eq3}  occurs due to the presence of the  noise (diffusion)  term.  We first consider the existence and uniqueness  of a weak solution locally in time by  using It\^o's formula and a semigroup approach. Local-in-time existence and uniqueness are rather standard and one can easilly apeals to the well known results \cite{dazab}. However, due to the non-local nature of \eqref{eq1}-\eqref{eq3} and  for the sake of completeness we  present a detailed proof.


\subsection{ Existence and uniqueness of a local-in-time  solution}
We first set up the main functional and stochastic framework which  will be used for our analysis throughout the manuscript.

Let  $A= - \Delta$ with $\mathcal{D}(A) = H_{0}^{1}(D) \cap H^{2}(D)$, where $H_{0}^{1}(D)$ and $H^{2}(D)$ denote the standard Hilbert spaces and  assume that $A$ has the eigenpairs $(\lambda_{j}, \phi_{j}), j=1, 2, 3, \dots.$  Let $L(H)$ denote the space of all bounded operators from $H= L^{2} (D)$ to $H$ and let $Q \in L(H)$ be a non-negative definite and symmetric bounded operator on $H$ with orthonormal eigenfunctions  $\chi_{j} \in H, j=1,2, 3, \dots$ and  corresponding eigenvalues $ \gamma_{j}> 0, j=1, 2, 3, \dots$ such that $\text{Tr} (Q) = \sum_{j=1}^{\infty} \gamma_{j} <\infty$. (i.e., $Q$ is of trace class).    For simplicity, hereafter we choose $\chi_{j}= \phi_{j}, j=1, 2, 3, \dots.$

Thereafter we let $W=W(x, t)$ denote the $H$-valued $Q$-Wiener process defined by
\begin{equation} \label{noise1}
W(x,t) = \sum_{j=1}^{\infty} \gamma_{j}^{1/2} \phi_{j}(x)  \beta_{j}(t), \quad\mbox{almost surely (a.s.)}\;,
\end{equation}
where $\beta_{j}(t)$ are independent and identically distributed  $\mathcal{F}_{t}$-Brownian motions.

For the trace  operator  $Q \in L(H)$ with $\text{Tr} (Q) <\infty$,  there exists a kernel $q(x, y)$ such that
\[
(Qu) (x) := \int_{D} q(x, y) u(y) \, dy, \quad\mbox{for any} \; x \in D, \; u \in H,
\]
see \cite[p. 42-43]{cho} and \cite[Definition 1.64]{lor}.
The kernel $q(x, y)$ is also called the covariance function of the $Q$-Wiener process $W(x,t)$.

Let $X$ be a  Banach space with the norm $\| \cdot \|_{X}$. then we define the following Hilbert space
$$
 L_{2}^{0}(H; X) =\left\{\psi \in  L(H,X): \;     \sum_{j=1}^{\infty}   \| \psi Q^{1/2} (\phi_{j})  \|_{X}^{2} = \sum_{j=1}^{\infty} \gamma_{j} \| \psi (\phi_{j})  \|_{X}^{2} <\infty \right\},
$$
with norm $\| \psi \|_{L_{2}^{0}} =  \Big ( \sum_{j=1}^{\infty} \gamma_{j} \| \psi (\phi_{j})  \|_{X}^{2} \Big )^{1/2};$
 here $L(H, X)$ denotes  the space of all bounded operators from $H$ to $X$.  Then for any functional  $\Psi: [0, T] \to L_{2}^{0} (H, X)$,  the stochastic integral $\int_{0}^{T} \Psi (t) \, d W(t)$ is well defined, see for example \cite{dazab}. For the sake of simplicity we drop the spatial dependence from $W(x,t)$ and hereafter we denote it by  $W(t).$

 In order to  write \eqref{eq1}-\eqref{eq3} in the abstract form, we define the Nemytskii operator
\be\label{F}
F: H \to H, \quad \mbox{with} \;  F(u) (x) :=\frac{f(u(x))}{\left(\int_D f(u(x))\,dx \right)^q}, \; \; \mbox{for any} \;\;x \in D\quad \mbox{and}\;\; q>0,
\ee
and for any $u\in H.$

Here $f: \IR\to \IR$  is assumed to be a  local  Lipschitz continuous function, that is,  for any $s_0\in \IR$ there exist $\delta>0$ and $C_f$ such that for any $s_1,s_2\in\{s\in \IR: |s-s_0|<\delta\}$ there holds
\be\label{cond0}
|f(s_1)-f(s_2)|\leq C_f |s_1-s_2|.
\ee
In addition, we  assume that
$
\sigma: H \to L_{2}^{0} (H, H)
$
is an $L_{2}^{0} (H, H)$-valued operator and    then we may write the  problem \eqref{eq1}-\eqref{eq3} as the following  It\^{o} equation in  $H,$
\begin{align}
& d u(t)   =  \left[-A  u(t) +F(u(t))\right]  \, dt + \sigma(u(t))\, dW(t), \quad 0<t<T, \label{intro1} \\
& u_0 = \xi,  \label{intro2}
\end{align}
 where $u(t):=u(\cdot,t).$

We now introduce the definition of  solutions of \eqref{intro1}-\eqref{intro2}, see  \cite{cho,lor}, which will be mainly used throughout the paper.
\begin{defn}   \label{weak}
A predictable $H$-valued stochastic process $\{ u(t): t \in [0, T] \}$ is called a {\it weak solution} of \eqref{intro1}- \eqref{intro2} if for any  $v \in \mathcal{D}(A)$ and almost every  (a.e.) $ t \in [0, T]$, the following equality holds
\be\label{wf}
(u(t), v)
=
(\xi, v) + \int_{0}^{t} \Big [
-(u(s), A v) + (F(u(s)), v) \Big ] \, ds
+
\int_{0}^{t} \big ( \sigma (u(s)) \, d W(s), v \big ),
\ee
\end{defn}
almost surely (a.s).
The weak  formulation \eqref{wf}  is chosen since it is  more appropriate for our study on finite-time blow-up.

It is also known, \cite{dazab,lor}, that any weak solution $u$ of \eqref{intro1}- \eqref{intro2} is also a {\it mild solution} of \eqref{intro1}- \eqref{intro2}, that is,  it satisfies the following equality in $H=L^2(D),$
\[
u(t) = E(t) \xi + \int_{0}^{t} E(t-s) F(u(s)) \, ds + \int_{0}^{t} E(t-s) \sigma ( u(s)) \, d W(s),
\]
where $E(t) = e^{-t A}$ is the analytic semigroup generated by $-A,$ see \cite{lor}. On the other hand, any regular enough mild solution is also a weak solution, cf. \cite{dazab,lor}.

Before proceeding with the local-in-time existence of   \eqref{intro1}-\eqref{intro2} we prove the following result which will be frequently used throughout this section.
\begin{lem} \label{lip}
Assume that $f$ satisfies condition \eqref{cond0}  and it is  also bounded below by a positive constant, i.e. $f(s)\geq m>0, \;s\in \IR.$
Then the operator $F$  defined by \eqref{F}, satisfies a locally Lipschitz condition. In particular, for any $u_0\in H$ there exist $\delta>0$ and $C_{F}>0$ such that for any $u_1,u_2\in B_{u_0,\delta}=\{u\in H: ||u-u_0||_{\infty}<\delta\}$ there holds
\be\label{F1}
||F(u_1)-F(u_2)||_{H}\leq C_{F}||u_1-u_2||_{H}.
\ee
\end{lem}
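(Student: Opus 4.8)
The plan is to exploit the $L^{\infty}$-ball appearing in the hypothesis to turn the \emph{local} Lipschitz condition \eqref{cond0} into a genuine Lipschitz bound on a fixed compact interval, and then to handle the non-local quotient by estimating its numerator and its (scalar) denominator separately. First I would note that for $u_1,u_2\in B_{u_0,\delta}$ one has $\|u_i\|_{\infty}\le \|u_0\|_{\infty}+\delta=:M$, so that the values $u_i(x)$ lie a.e.\ in the compact interval $I=[-M,M]$. Covering $I$ by finitely many of the neighbourhoods supplied by \eqref{cond0} and retaining the largest of the associated constants yields a single $L=L(M)>0$ with $|f(s_1)-f(s_2)|\le L\,|s_1-s_2|$ for all $s_1,s_2\in I$; continuity of $f$ then also gives $m\le f(s)\le \overline{M}$ on $I$ for some $\overline{M}<\infty$.

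Writing $N(u):=\int_D f(u)\,dx$ for the denominator and letting $|D|$ denote the Lebesgue measure of $D$, the structural point is that the strict positivity $f\ge m>0$ forces $N(u)\ge m|D|>0$ uniformly in $u$, while on the ball $N(u)\le \overline{M}|D|$; hence $N(u)^{q}$ stays trapped in a fixed interval bounded away from $0$ and $\infty$. This uniform non-degeneracy of the denominator is exactly what makes the quotient $F$ Lipschitz, and it is the place where the hypothesis $f\ge m$ is essential. I would then decompose, with $N_i:=N(u_i)$,
\[
F(u_1)-F(u_2)=\frac{f(u_1)-f(u_2)}{N_1^{\,q}}+f(u_2)\Big(\frac{1}{N_1^{\,q}}-\frac{1}{N_2^{\,q}}\Big),
\]
and estimate the two summands in $\|\cdot\|_H$ independently.

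For the first summand the denominator is a constant, so it pulls out and leaves $N_1^{-q}\|f(u_1)-f(u_2)\|_H\le (m|D|)^{-q}L\,\|u_1-u_2\|_H$, where the last inequality applies the pointwise Lipschitz bound inside the $L^2$-integral. For the second summand the bracketed factor is again a scalar; I would bound $\|f(u_2)\|_H\le \overline{M}|D|^{1/2}$ and use that $t\mapsto t^{-q}$ is $C^1$, hence Lipschitz, on $[m|D|,\overline{M}|D|]$ with constant $q(m|D|)^{-q-1}$, so that $|N_1^{-q}-N_2^{-q}|\le q(m|D|)^{-q-1}\,|N_1-N_2|$. Finally $|N_1-N_2|\le \int_D|f(u_1)-f(u_2)|\,dx\le L\|u_1-u_2\|_{1}\le L|D|^{1/2}\|u_1-u_2\|_H$ by the Lipschitz bound followed by Cauchy--Schwarz. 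Collecting the constants produces the desired $C_F=C_F(m,\overline{M},L,q,|D|)$ and establishes \eqref{F1}.

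The individual steps are routine once the setup is in place; the only genuine subtleties are (i) upgrading \eqref{cond0} to a uniform constant on $I$, which requires the $L^{\infty}$ control provided by the ball (and implicitly $u_0\in L^{\infty}$, so that $M<\infty$), and (ii) recognising that the non-local denominator contributes only a scalar factor, whose variation must be controlled through the difference $N_1-N_2$ measured in $L^1$ and then converted to $H=L^2$. I expect (i) to be the main conceptual point, since it is precisely where both hypotheses on $f$, namely local Lipschitz continuity and the lower bound $f\ge m$, are simultaneously used.
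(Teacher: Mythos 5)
Your proof is correct and follows essentially the same route as the paper's: the identical two-term decomposition of $F(u_1)-F(u_2)$, the lower bound $N(u)\ge m|D|$ to control the denominator, a mean-value estimate on the power of $N$, and H\"older's inequality to pass from $L^1$ to $H$. The only differences are cosmetic (you apply the mean value theorem to $t^{-q}$ rather than to $t^{q}$) plus a welcome extra dose of rigour in upgrading \eqref{cond0} to a uniform Lipschitz constant on $[-M,M]$ via compactness, a step the paper leaves implicit.
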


\begin{proof}
We have
\begin{eqnarray}
&& | F(u_1)(x) - F(u_{2})(x) |
= \left |
\lambda \frac{f(u_1(x))}{\left ( \int_{D} f(u_1(x)) \, dx \right )^{q}} -
\lambda \frac{f(u_{2}(x))}{\left ( \int_{D} f(u_{2}(x)) \, dx \right )^{q}}
\right | \nonumber \\
&& \leq
  \frac{ \lambda |f(u_1(x)) - f(u_{2}(x))|   }{\left ( \int_{D} f(u_1(x)) \, dx \right )^{q}} \nonumber \\
&& \quad  +
\frac{\lambda |f(u_{2}(x))|   }{\left ( \int_{D} f(u_1(x)) \, dx \right )^{q} \left ( \int_{D} f(u_{2}(x)) \, dx \right )^{q}}
\left |
 \left ( \int_{D} f(u_1(x)) \, dx \right )^{q}
-
 \left ( \int_{D} f(u_{2}(x)) \, dx \right )^{q}
\right |\nonumber \\
&&   \leq C_f (m |D|)^{-q}|u_1(x) - u_{2}(x)|\nonumber \\
&& \quad  +
\frac{\lambda |f(u_{2}(x))|   }{\left ( \int_{D} f(u_1(x)) \, dx \right )^{q} \left ( \int_{D} f(u_{2}(x)) \, dx \right )^{q}}
\left |
 \left ( \int_{D} f(u_1(x)) \, dx \right )^{q}
-
 \left ( \int_{D} f(u_{2}(x)) \, dx \right )^{q}
\right |.
\label{psk1}\quad\qquad
\end{eqnarray}
By the  mean value theorem  and taking also into account \eqref{cond0},  we obtain
\begin{eqnarray}
&&\left |  \left( \int_{D} f(u_1(x)) \, dx \right )^{q} - \left( \int_{D} f(u_{2}(x)) \, dx \right )^{q}  \right | \nonumber \\
&& = q \left (  \int_{D} f(\bar{u}(x)) \, dx \right )^{q-1} \left |
  \int_{D} f(u_1(x)) \, dx -  \int_{D} f(u_{2}(x)) \, dx
\right |  \nonumber \\
&& \leq q  \left ( \int_{D} f(\bar{u}(x)) \, dx \right)^{q-1}  \int_{D} \left |   f(u_1(x)) - f(u_{2}(x))   \right | \, dx \nonumber \\
&& \leq C_f\, q  \left( \int_{D} f(\bar{u}(x)) \, dx \right )^{q-1}  \int_{D} |u_1(x) - u_{2}(x)|    \, dx \nonumber\\
&& \leq \widehat{C}_f\int_{D} |u_1(x) - u_{2}(x)|    \, dx,  \label{psk2}
\end{eqnarray}
where $\bar{u}(x)$ is a  value between $u_1(x), u_2(x).$  Note that if $0<q<1$ then $\widehat{C}_f=C_fq (m |D|)^{q-1},$ otherwise if $q\geq 1$ then we take $\widehat{C}_f=C_fq (M |D|)^{q-1}$ where $M=\sup_{x\in D, u\in B_{u_0,\delta}}\{f(u(x))\}.$

Combining  \eqref{psk1} and \eqref{psk2}   we finally derive, by also using  H\"{o}lder's inequality,
\[
\|F(u_1) - F(u_{2}) \|_{H} \leq C_F \| u_1 - u_{2} \|_{H},\quad\mbox{whenever}\quad u_1,u_2 \in B_{u_0,\delta}.
\]
The proof of Lemma \ref{lip} is now complete.
\end{proof}
Next we establish the local-in-time existence of a  weak  solution  to \eqref{intro1}-\eqref{intro2}.

\begin{thm} {\bf(Local-in-time  existence)}\label{existence}
Assume that $ \xi$ is $\mathcal{F}_{0}$-measurable in $H$ with     $\xi\in L^2(\Omega;L^{\infty}(D))$  and \eqref{F1} holds. Assume also that $\sigma: H\to L_{2}^{0}(H, H)$ is a  locally Lipschitz continuous mapping, i.e. for any $u_0\in H$ there exist $\delta>0$ and $C_{\sigma}>0$ such that for any $u_1,u_2\in B_{u_0,\delta}=\{u\in H: ||u-u_0||_{\infty}<\delta\}$ there holds
\begin{equation} \label{new1}
\|\sigma(u_{1}) - \sigma(u_{2}) \|_{ L_{2}^{0}(H, H)} \leq C_{\sigma} \| u_{1} - u_{2} \|_{H}.
\end{equation}
 Then the following hold true:
\begin{enumerate}
\item
There exists $T>0$ such that \eqref{intro1}-\eqref{intro2} has a unique weak solution   $ u \in L^{2}((0, T);   L^{\infty}(D)\cap W^{1,2} (D))  \cap L^{\infty}((0, T);H)$.
\item
The solution $u$ admits $H$-valued continuous trajectories and satisfies
\begin{equation} \label{exsit_2}
\mathbb{E}\Big[ \sup_{0 \leq t \leq T} \| u(t) \|_{H}^2\Big]
+ \mathbb{E}\Big[ \int_{0}^{T} \| \nabla u(t) \|_{H}^{2} \, dt\Big]
\leq C\, \mathbb{E}\big[ \| \xi \|_{H}^2\big].
\end{equation}
\item
In particular  the solution $u,$ seen as a stochastic process, belongs to the following functional space
\bgee
 L^{2} \big ( \Omega; C \big ( [0, T];H \big ) \big )
\cap
 L^{2} \big ( \Omega; L^{2} \big ( (0, T);    L^{\infty}(D)\cap W^{1,2} (D) \big ) \big )
\cap
 L^{p} \big ( \Omega; L^{\infty}((0, T); L^{p}(D)) \big ),
\egee
for any $p \geq 2.$
\end{enumerate}
\end{thm}
\begin{rem}
It is worth noted that due to the regularity provided by Theorem \ref{existence}  any weak solution $u$ of \eqref{intro1}-\eqref{intro2} also satisfies 
for almost every  (a.e.) $ t \in [0, T]$ and almost surely (a.s.) the following 
\bgee
(u(t), v)
=
(\xi, v) + \int_{0}^{t} \Big [
-(\nabla u(s), \nabla v) + (F(u(s)), v) \Big ] \, ds
+
\int_{0}^{t} \big ( \sigma (u(s)) \, d W(s), v \big ),
\egee
for any  $v \in W^{1,2}(D),$ and  it  is also called a {\it  variational solution} of  \eqref{intro1}-\eqref{intro2}.  Notably, all the results in the present work hold also for varational solutions of \eqref{intro1}-\eqref{intro2}.
\end{rem}
A key tool for the proof  of Theorem \ref{existence}, is  the following version of It\^{o}'s Lemma in Hilbert spaces.
\begin{lem} [\cite{cho}]  \label{ito}
Assume that $F$ and $\sigma$ satisfy \eqref{F1} and \eqref{new1} respectively.
Assume further that $ \xi$ is $\mathcal{F}_{0}$-measurable in $H$ and that $u$ satisfies the It\^{o} process
\[
d u= \left(-A u + F(u) \right)\, dt + \sigma (u) \, dW(t), \quad u(0) = \xi.
\]
If $\psi: H \to \mathbb{R}$ is a $\;C^{2}(H, \mathbb{R})$ functional, then the following holds
\begin{align}
 d \psi (u(t))
& = \psi^{\prime} (u(t)) \left[ \left(
- A u(t)
+ F(u(t))\right)\, dt
+ \sigma ( u(t)) \, dW(t) \right] \notag \\
& \quad  + \frac{1}{2} \mbox{Tr} \Big ( \sigma^{*}(u(t)) \psi^{\prime \prime} (u(t)) \sigma (u(t))  \Big ) \, d W(t), \notag
\end{align}
$ \sigma^{*}$ denotes the dual (transpose) operator of the diffusion operator $\sigma.$
\end{lem}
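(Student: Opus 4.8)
The plan is to derive the stated It\^o formula by reducing it to the classical finite-dimensional It\^o rule along a spectral approximation and then passing to the limit. It is convenient to read the identity in its integrated form, namely
\begin{align*}
\psi(u_t) &= \psi(\xi)+\int_0^t \psi'(u_s)\big[-Au_s+F(u_s)\big]\,ds+\int_0^t \psi'(u_s)\,\sigma(u_s)\,dW_s\\
&\quad +\tfrac12\int_0^t \mathrm{Tr}\big(\sigma(u_s)^{*}\,\psi''(u_s)\,\sigma(u_s)\big)\,ds .
\end{align*}
The It\^o correction (last) term is a finite-variation $ds$-integral of the adapted real process $\mathrm{Tr}(\sigma^{*}\psi''\sigma)$, which is finite since $\sigma(u_s)\in HS(U,H)$ and $\psi''(u_s)\in L(H)$; the differential notation in the statement is shorthand for this equality of real semimartingales.

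First I would treat cylindrical functionals. Let $\{\phi_j\}_{j\ge1}$ be the orthonormal eigenbasis of $A=-\Delta$ with $0<\lambda_1\le\lambda_2\le\cdots$, and consider $\psi$ of the form $\psi(u)=\phi\big((u,\phi_1)_H,\dots,(u,\phi_n)_H\big)$ with $\phi\in C^2_b(\mathbb{R}^n)$. Testing the It\^o equation against $\phi_j$ shows that $X^j_t:=(u_t,\phi_j)_H$ satisfies the scalar It\^o equation $dX^j_t=\big[-\lambda_jX^j_t+(F(u_t),\phi_j)_H\big]ds+(\sigma(u_t)\,dW_t,\phi_j)_H$, so $(X^1,\dots,X^n)$ is a genuine finite-dimensional It\^o process. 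Applying the ordinary It\^o formula to $\phi(X^1_t,\dots,X^n_t)$ reproduces the drift and stochastic integrals, while the second-order term is $\tfrac12\sum_{i,j}\partial_{ij}\phi\,d\langle X^i,X^j\rangle_t$. Expanding $W$ along the orthonormal basis $\{\chi_k\}$ of $U$ and using the It\^o isometry in the Hilbert--Schmidt setting gives $d\langle X^i,X^j\rangle_t=\sum_k(\sigma(u_t)\chi_k,\phi_i)_H(\sigma(u_t)\chi_k,\phi_j)_H\,ds$; since $\psi''(u)$ is supported on $\mathrm{span}\{\phi_1,\dots,\phi_n\}$, summing over $i,j,k$ collapses this into exactly $\tfrac12\mathrm{Tr}(\sigma(u_t)^{*}\psi''(u_t)\sigma(u_t))\,ds$. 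Thus the formula holds for cylindrical $\psi$.

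Next I would remove the cylindrical restriction. For a general $\psi\in C^2(H,\mathbb{R})$, set $\psi_n(u):=\psi(P_nu)$, where $P_n$ is the orthogonal projection onto $\mathrm{span}\{\phi_1,\dots,\phi_n\}$; these are cylindrical, $\psi_n\to\psi$, $\psi_n'(u)=P_n\psi'(P_nu)\to\psi'(u)$ and $\psi_n''(u)\to\psi''(u)$ locally uniformly. The identity holds for each $\psi_n$, and it remains to pass to the limit in each integral. Here I would use only the integrability built into $u$ being a strong solution of \eqref{sie} in the sense of Definition~\ref{strong}: the drift $b:=-Au+F(u)$ is a predictable $H$-valued process with $\int_0^T\|b_s\|_H\,ds<\infty$ a.s.\ and $\int_0^T\|\sigma(u_s)\|_{HS}^2\,ds<\infty$ a.s. After localizing by the stopping times $\tau_m=\inf\{t:\|u_t\|_H\ge m\}$, dominated convergence handles the drift and trace integrals, and the It\^o isometry handles the stochastic integral once $\psi_n'\,\sigma(u)\to\psi'\,\sigma(u)$ in $L^2(\Omega\times(0,T);HS(U,\mathbb{R}))$.

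The main obstacle is the presence of the unbounded operator $A$ in the drift. If $u$ is a strong solution then $u_s\in\mathcal{D}(A)$ a.s.\ and $b_s\in H$, so $u_t=\xi+\int_0^t b_s\,ds+\int_0^t\sigma(u_s)\,dW_s$ is literally an $H$-valued It\^o process and the formula follows from the limiting argument above with $b_s$ treated as an abstract $H$-valued drift. The genuinely delicate situation — and the reason the result is isolated as a lemma and its proof referred to \cite{cho} — is when $u$ is only a weak/variational solution lying in $\mathcal{D}(A^{1/2})$, as is relevant for the $H_2^1$-solutions of this paper: then $-Au_s$ lives only in the dual space $\mathcal{D}(A^{1/2})^{*}$, the pairing $\psi'(u_s)(-Au_s)$ must be read variationally as $-\langle\nabla u_s,\nabla\psi'(u_s)\rangle$ within the Gelfand triple $\mathcal{D}(A^{1/2})\hookrightarrow H\hookrightarrow\mathcal{D}(A^{1/2})^{*}$, and one must establish the formula first for the Yosida approximations $A_m=mA(mI+A)^{-1}\in L(H)$ and the associated regularized processes $u^m$, for which every term is bounded, and then let $m\to\infty$. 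I expect this variational passage, which rests genuinely on energy estimates for $\nabla u$ and the convergence properties of the $A_m$ rather than on any soft argument, to be the crux; all remaining limits are routine given \eqref{F1}, \eqref{new1} and the trace-class property of $Q$.
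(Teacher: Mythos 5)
The paper does not actually prove Lemma~\ref{ito}: it is stated as a citation to \cite{cho} and used as a black box, so there is no internal proof to compare yours against. Your proposal supplies the standard proof that such references give, and it is the right skeleton: reduce to cylindrical functionals via the spectral projections $P_n$ (where testing \eqref{wf} against eigenfunctions $\phi_j$ turns $A$ into the scalars $\lambda_j$ and the classical finite-dimensional It\^{o} formula applies), identify the quadratic-variation term with the trace, then pass to the limit with localization. Two points in your favor: your integrated form correctly places $ds$ on the trace term, silently repairing what is evidently a typo in the paper's statement (the It\^{o} correction cannot carry $dW_t$); and your honest separation of the strong-solution case (drift genuinely $H$-valued, limiting argument routine) from the variational case is exactly where the real difficulty sits.

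Two caveats, one minor and one substantive. Minor: since the paper's $W$ is a $Q$-Wiener process \eqref{noise1}, your bracket computation should carry the eigenvalues $\gamma_k$, i.e.\ $d\langle X^i,X^j\rangle_t=\sum_k \gamma_k(\sigma(u_t)\chi_k,\phi_i)_H(\sigma(u_t)\chi_k,\phi_j)_H\,dt$, so the trace is of $\sigma Q\sigma^{*}\psi''$ unless one absorbs $Q^{1/2}$ into $\sigma$ or works with cylindrical noise; this is a convention mismatch, not an error. Substantive: for the variational case you defer, the fix you name is not available for arbitrary $\psi\in C^{2}(H,\mathbb{R})$ --- the pairing $\langle -Au_s,\psi'(u_s)\rangle$ only makes variational sense if $\psi'(u_s)$ lies in $\mathcal{D}(A^{1/2})=H_0^1(D)$, which an abstract $C^2$ functional does not guarantee (its derivative is merely an element of $H$). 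The standard results (Pardoux, Krylov--Rozovskii, or the version in \cite{cho}) therefore impose extra structure, e.g.\ $\psi'$ mapping $V$ into $V$ with suitable bounds, or restrict to $\psi(u)=\|u\|_H^2$. This matters here because the paper's actual use of Lemma~\ref{ito} in the proof of Theorem~\ref{existence} is precisely for $\varphi(w_t)=\|w_t\|^2$ with $w_t$ only an $H_0^1$-valued weak solution, i.e.\ the case you flag but do not close. So your proof is complete for the lemma read with a strong solution, but to cover the paper's application you would need either the Yosida-approximation argument carried out for the norm-square functional specifically, or the added hypothesis on $\psi'$ stated explicitly.
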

\begin{proof}[ Proof of Theorem \ref{existence}]
The proof is inspired by  \cite[Theorem 3]{dms1}; actually in \cite{dms1}  the more general  quasilinear problem is tackled. In particular, here we apply the semigroup approach to establish the local-in-time  existence of a mild solution for the semilinear problem \eqref{intro1}-\eqref{intro2}, which is finally regular enough to be also a weak solution.

Denote
\[
S_{T}  = \left\{ u\big |\, u \in L^{2} \big (\Omega \times [0, T];   L^{\infty}(D)\cap H_{0}^{1} (D) \big )\right \},
\]
equipped  with  the norm, with some suitable $ \gamma>0, \delta>0$ determined later,
\[
\| u \|_{\gamma, \delta}^2
: = \mathbb{E}\Big [
\int_{0}^{T} e^{- \gamma t} \big (
\| u(t) \|_{H}^2 + \delta \| \nabla u(t) \|_{H}^2 \big ) \, dt
\Big ].
\]
It is clear that $\| \cdot \|_{\gamma, \delta}$ is equivalent to $ \| \cdot \|_{S_{T}} $ for any $u\in S_{T},$ where $$ \| u \|^2_{S_{T}}: = \mathbb{E}\left[ \int_{0}^{T} \| u(t) \|^2_{H_{0}^{1}(D)} \, dt\right].$$
Consider the map $\mathcal{M}: S_{T} \to L^{2} \big (\Omega \times [0, T];   L^{\infty}(D)\cap H_{0}^{1} (D) \big )$ which is defined by
\bge\label{mmk2}
\mathcal{M} (u) (t)
:= E(t)\, \xi
+ \int_{0}^{t} E(t-s) F(u(s)) \, ds
+ \int_{0}^{t} E(t-s) \sigma (u(s)) \, d W(s),\quad
\ege
where   $E(t)$ is the semigroup generated by $-A.$

In the following we shall employ the Banach's fixed point theorem to prove the existence and  uniqueness of a $u$ such that  $\mathcal{M} (u)=u$ in $S_T.$

{\it Step 1:}\rm\;  We first show that $\mathcal{M}: S_{T} \to S_{T}.$  To that end, we need to show that  for any $u \in S_{T}$,  $\mathcal{M} ( u) \in S_{T}$, i.e.,
\[
\| \mathcal{M} (u) \|_{S_{T}}^2 = \mathbb{E}\left[ \int_{0}^{T} \| \mathcal{M}(u) (t) \|_{H_{2}^{1}(D)}^2 \, dt\right] < \infty,
\]
which actually follows by the assumptions on $\xi, F$ and $\sigma$.

{\it Step 2:}\rm\; Next we  show that $\mathcal{M}$ is a contaction operator, i.e.  there exist positive constants $\gamma, \delta$ and $0<\kappa<1$ such that
\[
 \| \mathcal{M} (u) - \mathcal{M} (v) \|_{\gamma, \delta} \leq \kappa \| u - v \|_{\gamma, \delta},
\]
where $\kappa = \kappa (F, \sigma)$ depends on $F$ and $\sigma$.

In fact, by \eqref{mmk2} we have
\begin{align}
\bar{u}(t):
=
\mathcal{M} (u) (t)-\mathcal{M} ( v)(t)
=
& \int_{0}^{t} E(t-s) \Big ( F(u(s)) - F(v(s)) \Big ) \, ds  \notag \\
&   + \int_{0}^{t} E(t-s) \Big ( \sigma (u(s)) - \sigma (v(s)) \Big ) \, dW(s),  \notag
\end{align}
which satisfies the It\^{o} problem
\[
d \bar{u} + A \bar{u} \, dt = [ F(u)- F(v) ] \, dt + [ \sigma (u) - \sigma (v) ] \, d W(t).
\]
Let $ w(t)= \bar{u}(t)  e^{-\frac{\gamma}{2}  t},$ then $w(t)$ satisfies
\[
dw + A w\, dt =-  \frac{\gamma}{2} w \, dt    +  [ F(u)- F(v) ]  e^{- \frac{\gamma}{2} t} \, dt + [ \sigma (u) - \sigma (v) ] e^{- \frac{\gamma}{2} t}  \, d W(t).
\]
Implementing  It\^{o}'s formula, see Lemma \ref{ito}, with $\varphi (w) = \| w \|_{H}^2$
we deduce
\begin{align}
& e^{- \gamma T} \| \bar{u}(T)\|_{H}^2
+ 2 \int_{0}^{T} e^{-\gamma t} \| \nabla \bar{u}(t) \|_{H}^2 \, dt  \notag \\
& = - \gamma \int_{0}^{T} e^{-\gamma t} \| \bar{u}(t) \|_{H}^2 \, dt
+ 2 \int_{0}^{T} e^{-\gamma t}  \big ( \bar{u}(t), F(u(t)) - F(v(t)) \big ) \, dt \notag \\
& + \int_{0}^{T} e^{-\gamma t} \| \sigma (u(t)) - \sigma (v(t)) \|_{L_{2}^{0}(H, H)}^2 \, dt.  \label{existence10}
\end{align}
Notably  for any small $\epsilon>0$, and by virtue of Young's inequality, we obtain some constant $C_{\epsilon}$ depending on $\epsilon$ such that
\begin{align}
& 2  \int_{0}^{T} e^{-\gamma t} \big  ( \bar{u}(t), F(u(t)) - F(v(t)) \big ) \, dt  \notag \\
& \leq
\epsilon\int_{0}^{T} e^{-\gamma t} \| F(u(t)) - F(v(t)) \|_{H}^2 \, dt
+ C_{\epsilon} \int_{0}^{T} e^{-\gamma t} \| \bar{u}(t) \|_{H}^2 \, dt \notag \\
& \leq \epsilon\,\, C_F  \int_{0}^{T} e^{-\gamma t} \| u(t) - v(t) \|_{H}^2 \, dt
+ C_{\epsilon} \int_{0}^{T} e^{-\gamma t} \| \bar{u}(t) \|_{H}^2 \, dt,   \notag
\end{align}
taking also into account that $F$ satisfies a locally Lipschitz condition with constant $C_F$  by Lemma \ref{lip}.

Furthermore,  due to  \eqref{new1} we have
\[
\int_{0}^{T} e^{-\gamma t} \|\sigma (u(t)) - \sigma (v(t)) \|_{L_{2}^{0}(H, H)}^2  \, dt
\leq
C_{\sigma} \int_{0}^{T} e^{-\gamma t} \| u(t) - v(t) \|_{H}^2 \, dt
\]
and thus by virtue of \eqref{existence10} we obtain
\begin{align}\label{existence10a}
& e^{- \gamma T} \| \bar{u}(T)\|_{H}^2
+ 2 \int_{0}^{T} e^{-\gamma t} \| \nabla \bar{u}(t) \|_{H}^2 \, dt+ (\gamma- C_{\epsilon} )   \int_{0}^{T} e^{-\gamma t} \| \bar{u} (t) \|_{H}^2 \, dt  \notag \\
& \leq  (\epsilon\,\, C_F+ C_{\sigma})   \int_{0}^{T} e^{-\gamma t} \| u(t) - v(t) \|_{H}^2 \, dt. \end{align}
Taking the expectation on both sides of  \eqref{existence10a},  noting also that $\mathbb{E}\left[ e^{- \gamma T} \| \bar{u}(T)\|_{H}^2 \right] \geq 0$, we derive
\begin{align}
& (\gamma- C_{\epsilon} )  \mathbb{E}\left[ \int_{0}^{T} e^{-\gamma t} \| \bar{u} (t) \|_{H}^2 \, dt\right]
+ 2 \mathbb{E} \left[ \int_{0}^{T} e^{-\gamma t} \| \nabla \bar{u}(t) \|_{H}^2   \, dt\right]  \notag \\
& \leq \epsilon\,\, C_F   \mathbb{E}\left[ \int_{0}^{T} e^{-\gamma t} \| u(t) - v(t) \|_{H}^2 \, dt\right]
+ C_{\sigma} \mathbb{E} \left[ \int_{0}^{T} e^{-\gamma t} \| u(t) - v(t) \|_{H}^2 \, dt\right], \notag
\end{align}
or equivalently
\begin{align}
& \mathbb{E}  \left[\int_{0}^{T} e^{-\gamma t} \| \bar{u}(t) \|_{H}^2 \, dt\right]
+ \frac{2}{ \gamma - C_{\epsilon}}\,  \mathbb{E} \left[\int_{0}^{T} e^{-\gamma t} \| \nabla \bar{u} (t)  \|_{H}^2 \, dt\right] \notag \\
& \leq \frac{\epsilon\,\, C_F + C_{\sigma}}{\gamma - C_{\epsilon}} \mathbb{E} \left[\int_{0}^{T} e^{-\gamma t} \| u(t) - v(t) \|_{H}^2 \, dt\right], \notag
\end{align}
provided  $\gamma > C_{\epsilon}.$

Choosing  now $\gamma$  sufficiently large and suitable $\epsilon>0$  such that  $ 0 <  \frac{\epsilon\,\, C_F + C_{\sigma}}{\gamma - C_{\epsilon}}< \kappa <1$  we have
\begin{align}
& \mathbb{E} \left[\int_{0}^{T} e^{-\gamma t} \Big (
\| \bar{u}(t) \|_{H}^2 + \delta \| \nabla \bar{u}(t) \|_{H}^2 \Big ) \, dt\right]  \notag \\
& \leq  \kappa\,  \mathbb{E} \left[\int_{0}^{T} e^{-\gamma t}\Big ( \| u(t) - v(t) \|_{H}^2 + \delta \| \nabla (u(t) - v(t)) \|_{H}^2  \Big ) \, dt\right],  \notag
\end{align}
for  $\delta = \frac{2}{\gamma - C_{\epsilon}}.$ The latter entails
\[
\| \mathcal{M} ( u) - \mathcal{M} (v) \|_{\gamma, \delta} \leq \kappa \| u - v \|_{\gamma, \delta}, \quad 0 < \kappa <1,
\]
and thus by Banach's fixed point theorem, there exists a unique local-in-time solution  $ u \in S_{T}$ for the  problem  \eqref{intro1}-\eqref{intro2}.
Finally, the estimate \eqref{exsit_2} can be obtained by following a similar argument as in the proof of Theorem 3 in \cite{dms1}.

{\it Step 3:}\rm\;
Finally we show that  $u \in L^{p} \big ( \Omega; L^{\infty}((0, T); L^{p}(D)) \big ),\; p \geq 2.$  Note that $f: \mathbb{R} \to \mathbb{R}$ satisfies a local Lipschitz condition and hence  in conjunction with Lemma \ref{lip} we actually get
\[
\| F(u(t)) - F(v(t)) \|_{L^{p}(D)} \leq C_F \| u(t)- v(t) \|_{L^{p}(D)},
\]
and
\[
\| \sigma(u(t)) - \sigma(v(t)) \|_{L_{2}^{0}(H, L^{p}(D))} \leq C_{\sigma} \| u(t)- v(t) \|_{L^{p} (D)}.
\]
 Thus we may again  use the same arguments as in Steps 1 and 2 to  show
\[
 u \in L^{p} \big ( \Omega; L^{\infty}((0, T); L^{p}(D)) \big ),\quad\mbox{for}\quad p \geq 2,
\]
and so the mild solution  derived in Step 2 is also a weak solution, cf. \cite{dazab,lor}.
The proof of Theorem \ref{existence} is now complete.
\end{proof}
\begin{rem}\label{ys}
If we consider initial data $\xi(x)\geq 0$  almost surely (a.s) then our local solution $u(x,t)$ is also  positive a.s  by application of the comparison principle, see \cite{cpt}.
\end{rem}
\subsection{Noise term induced finite-time blow-up}\label{nbu}
In this subsection we investigate  the impact of the noise term on the phenomenon of finite-time blow-up. We actually prove that the finite-time blow-up occurs, when the noise term is so big that it dominates the drift term and thus leads the dynamics of the stochastic system.

Before proceeding further with the mathematical analysis, we  first  define the notion  of finite-time blow-up for problem \eqref{intro1}-\eqref{intro2}.
\begin{defn} {\bf  (Finite-time blow-up)}\label{mmk}\\
The solution $u$ of problem \eqref{intro1}-\eqref{intro2} (or equivalently that  of \eqref{eq1}-\eqref{eq3}) blows up in finite time in the  sense of  mean $L^p-$norm if there exists $0\leq T^*<\infty$ such that
$$
\limsup_{t\to T^*} \mathbb{E}\big[ \|u\|_{p}\big]=\infty,
$$
for some $1\leq p\leq \infty.$ Here $\| \cdot \|_{p}$ denotes the norm in $L^{p}(D)$.
\end{defn}

Throughout this subsection we assume  the following:\\
 $(S_1)$ The correlation function $q(x,y)$ of the Wiener process $W(t)$  is continuous and positive for any $x,y\in \overline{D}$ and satisfies
 \[
 \int_{D} \int_{D} q(x,y) w(x) w(y)\, dx dy\geq q_1 \int_{D} w^2(x)\, dx,
 \]
 for any positive $w\in H$ and for some $q_1>0.$   This actually means that the correlation function behaves as a steep Gaussian function.\\
 $(S_2)$ $\sigma(s)$ is  a convex function and there also exists a positive, strictly increasing, convex and superlinear function  $G_1(s)$  such that
 \bge\label{mmk3}
 \sigma^2(s)\geq 2\, G_1(s^2)\quad\mbox{for}\quad s\geq 0\quad\mbox{and}\quad \int_0^\infty \frac{ds}{G_1(s)}<\infty.
 \ege
 We also consider as  $(\lambda_{1}, \phi_{1})$  the first eigenpair of the operator $ A=-\Delta:\mathcal{D} ( A) =H_{0}^{1}(D) \cap W^{2,2}(D),$  i.e. there holds
\bgee
-\Delta\phi_1=\la_1\,\phi_1,\;x\in D\quad\mbox{and}\quad\phi_1=0,\;x\in \partial D.
\egee
It is known that  $\phi_{1},$ see \cite{d89},  has a constant sign on $D$ so we can take $\phi_{1}\geq 0$ on $\overline{D}$ and it can be also normalized so that
\be\label{nor}
\int_{D} \phi_1\,dx=1.
\ee
Next following the approach of \cite{cho1} we obtain the following.
 \begin{thm}\label{ntbua}
Under conditions $(S_1)$ and $(S_2)$  the (unique) local-in-time solution $u$ to \eqref{intro1}-\eqref{intro2} (or equivalently to \eqref{eq1}-\eqref{eq3}), and  provided  by Theorem \ref{existence},  blows up in finite time if the initial data  $\xi\in L^2(\Omega;H)$ satisfy  $\xi(x)\geq 0$ a.s.  and
\bgee
\theta(0)=\theta_0=:\mathbb{E}\left[\left(\int_{D} \xi(x)\, \phi_1(x)\,dx\right)^2\right]>\gamma,
\egee
where $\gamma$ is the largest root of the equation $\beta(s):=2\,\widehat{q}_1 G_1(s)-2\la_1\,s=0$
and $\widehat{q}_1$  is some positive constant.
 \end{thm}
 \begin{proof}
We first define
\bgee\label{exp}
\hat{u}(t): = \int_{D} u(x,t)\, \phi_{1}(x) \, dx.
\egee
Then we take $v=\phi_1$  as a test function into weak formulation \eqref{wf} to deduce
\begin{eqnarray}\label{bu1}
\hat{u}(t)
:= \int_{D} u (x,t)\, \phi_{1}(x) \, dx   &&=\int_{D}  \xi (x) \,\phi_{1}(x) \, dx
-\int_{0}^{t} \int_{D} u (x, s)\,  (A\phi_{1})(x)  \, dx \, ds  \no \\
&&   +\la\int_{0}^{t} \int_{D} \frac{f(u(x, s))\, \phi_{1}(x)}{\left(\int_{D} f(u(x, s))\,dx \right)^{q}} \, dx \, ds\no\\
&&+\int_{0}^{t} \int_{D}   \sigma(u(x,s))\,\phi_{1}(x)\,dx \, dW(x,s)\no\\
&&=\int_{D}  \xi (x)\,\phi_{1}(x) \, dx
-\la_1\int_{0}^{t} \int_{D} u(x, s)\,\phi_{1}(x) \, dx \, ds\no\\&& +\la\int_{0}^{t} \int_{D} \frac{f(u(x,s))\, \phi_{1}(x)}{\left(\int_{D} f(u(x,s))\,dx \right)^{q}} \, dx \, ds\no\\
&&+\int_{0}^{t} \int_{D}   \sigma(u(x,s))\,\phi_{1}(x) \,dx \, dW(x,s).\quad\qquad
\end{eqnarray}
Next  It\^{o}'s formula, i.e. Lemma \ref{ito}, for $\psi(u)=u^2$ implies
\bge\label{mk1a}
\hat{u}^2(t) &&=\left(\int_{D} \xi(x)\, \phi_1(x)\,dx\right)^2-2\la_1\int_0^t \hat{u}^2 (s)\,ds\nonumber\\
&&+2\,\lambda\int_0^t\int_{D} \hat{u} (s)\frac{f(u(x,s))\, \phi_{1}(x)}{\left(\int_{D} f(u(x,s))\,dx \right)^{q}}\,dx\,ds+ 2 \int_0^t\int_{D} \hat{u} (s)\sigma(u(x,s))\, \phi_{1}(x)\,dx\,\, dW(x,s)\no\\
&&+\int_0^t\int_{D}\int_{D} q(x,y) \phi_1(x)\,\phi_1(y) \sigma(u(x,s))  \sigma(u(y,s))\,dx\,dy\,ds,
\ege
where \eqref{bu1} is also taking into account.

Set $\theta(t):=\mathbb{E}\left[\hat{u}^2(t)\right],$ then by taking the expectation into \eqref{mk1a} and interchanging the order of expectation and integration by virtue of Fubini's theorem, we have
\bge\label{mk2a}
\theta(t)&&=\mathbb{E}\left[\left(\int_{D} \xi(x)\, \phi_1(x)\,dx\right)^2\right]-2\la_1\int_0^t \theta(s)\,ds\no\\
&&+2\mathbb{E}\left[\int_0^t\int_{D}\hat{u}(s)\frac{f(u(x,s))\, \phi_{1}(x)}{\left(\int_{D} f(u(x,s))\,dx \right)^{q}}\,dx\,ds\right]\no\\
&&+\mathbb{E}\left[\int_0^t\int_{D}\int_{D} q(x,y) \phi_1(x)\,\phi_1(y)  \sigma(u(x,s))  \sigma(u(y,s)) \,dx\,dy\,ds\right],\qquad
\ege
where we use the following result
\bgee
\mathbb{E}\left[ \int_0^t\int_{D} \hat{u} (s)\sigma(u(x,s))\, \phi_{1}(x)\,dx\,\, dW(x,s)\right]=0.
\egee
 Alternatively \eqref{mk2a} can be written in the differential form
\bge
\frac{d\theta}{dt}
= &&-2\la_1\,\theta(t)+2\,\lambda\mathbb{E}\left[\hat{u}(t)\int_{D}\frac{f(u(x,t))\, \phi_{1}(x)}{\left(\int_{D} f(u(x,t))\,dx \right)^{q}}\,dx \right]\no\\
&&+\mathbb{E}\left[\int_{D}\int_{D} q(x,y) \phi_1(x)\,\phi_1(y)  \sigma(u(x,t))  \sigma(u(y,t)) \,dx\,dy\right],\; t>0,\qquad\quad\label{mk3}
\ege
with initial condition $$\theta(0)=\theta_0:=\mathbb{E}\left[\left(\int_{D} \xi(x)\, \phi_1(x)\,dx\right)^2\right].$$
Now assumptions $(S_1)$ and $(S_2)$ along with  Jensen's and H\"{o}lder's inequalities imply that the third term in the right side of \eqref{mk3} is estimated as
\bgee
&&\mathbb{E}\left[\int_{D}\int_{D} q(x,y) \phi_1(x)\,\phi_1(y)  \sigma(u(x,t))  \sigma(u(y,t)) \,dx\,dy\right]\no\\
&&\geq q_1 \mathbb{E}\left[\int_{D} \phi_1^2(x) \sigma^2(u(x, t))\, dx\right]\geq \tilde{q}_1 \mathbb{E}\left[\int_{D} \phi_1(x) \sigma(u(x, t))\, dx\right]^2\no\\
&& \geq\tilde{q}_1 \mathbb{E}\left[\sigma^2(\hat{u}(t))\right]\geq 2\,\tilde{q}_1\mathbb{E}\left[G_1\left(\hat{u}^2(t)\right)\right]\geq 2\,\widehat{q}_1 G_1(\theta(t)),
\egee
for  some appropriate positive constant $\widehat{q}_1$,  where \eqref{nor} has been also taken into consideration.

Therefore $\theta$ satisfies
\bge
&&\frac{d\theta (t)}{dt}\geq-2\la_1\,\theta(t)+2\,\widehat{q}_1 G_1(\theta(t)):=\beta(\theta(t)),\; t>0, \label{mk4}\\
&& \theta(0)=\mathbb{E}\left[(\xi,\phi_1)^2 \right], \label{mk5}
\ege
using also the fact that the second term in \eqref{mk3} is positive, see also Remark \ref{ys}.

Let now $\gamma$ be the largest root of the equation $\beta(s)=0,$ then we have $\beta(s)>0$ for any $s>\gamma$ if $\gamma>0.$ Otherwise, if $\gamma=0$ then  we have $\beta(s)>0$ for any $s>0.$ Therefore, if we take $\theta(0)>\gamma$ then by \eqref{mk4}-\eqref{mk5} we have
\bgee
t\leq \int_{\theta_0}^{\theta(t)} \frac{ds}{\beta(s)} \leq \int_{\theta_0}^{\infty} \frac{ds}{\beta(s)}.
\egee
Next using that $G_1(s)$ is a superlinear function, due to \eqref{mmk3}, we derive
\bgee
t\leq \int_{\theta_0}^{\theta(t)} \frac{ds}{\beta(s)}\leq \int_{\theta_0}^{\infty} \frac{ds}{\beta(s)}\leq \frac{1}{N} \int_{\theta_0}^{\infty} \frac{ds}{G_1(s)}<\infty,
\egee
for some positive constant $N,$ hence 
\bge \label{mmk4}
\theta(t)\to \infty\quad\mbox{as}\quad t\to T_1,
\ege 
where
$$
T_1\leq \int_{\theta_0}^{\infty} \frac{ds}{G_1(s)}<\infty.
$$
Notably,  by virtue of H\"{o}lder's inequality we derive
\bge\label{mmka}
\theta(t):=\mathbb{E}\left[\hat{u}^2(t)\right]\leq \mathbb{E}\left[||u||^2_{2}\right],
\ege 
which in conjunction with \eqref{mmk4}  implies
\bgee
\mathbb{E}\left[||u||^2_{2}\right]\to \infty\quad\mbox{as}\quad t\to T^*\leq T_1.
\egee
The proof of the Theorem is now complete.
 \end{proof}
 \begin{rem}
Since $u$ is bounded in $D\times [0,T)$  then \eqref{nor} and  \eqref{mmka}, via Theorem \ref{existence},  imply
\[
\theta(t):=\mathbb{E}\left[\hat{u}^2(t)\right]\leq \mathbb{E}\left[||u||^2_{\infty}\right]\to \infty,\quad\mbox{as}\quad t\to T_b\leq T_1.
\]
 Consequently Theorem \ref{ntbua} entails the finite-time blow-up of the stochastic process $u$  with respect to  $L^{\infty}-$ norm as well as according to Definition \ref{mmk}. 
 \end{rem}
\begin{rem}
The result of Theorem \ref{ntbua} with $f(s)=e^s$ and $q> 1$ complements the results of Theorems 4.1 and 4.2 in \cite{bl1}. Indeed those theorems state that when $\sigma(s)=0,$ i.e. for the deterministic case, only a global-in-time solution exists. Consequently,  Theorem \ref{ntbua} unveils that a dominant noise can change dramatically the dynamical behaviour of the solution leading to finite-time blow-up. Moreover Theorem \ref{ntbua} ensures the occurrence of finite-time blow-up in the case $f(s)=e^s, q=1,$  for any dimension $d>2,$ a result that was only conjectured for the deterministic case and only proven for $d=2,$ see  in \cite{ks}. In the latter case problem \eqref{eq1}-\eqref{eq3} is stochastic perturbation of a problem which describes the biological phenomenon of chemotaxis and so the occurrence of finite-time blow-up describes the aggregation of a biological population.
\end{rem}
\section{Drift term induced blow-up}\label{psk}
This  section deals with the  finite-time blow-up of \eqref{eq1}-\eqref{eq3} induced by the non-local drift (reaction) term. For the proof of  such results a delicate estimate of the non-local term is needed, which  actually arises as a by-product of an estimate of the solution $u$ of \eqref{eq1}-\eqref{eq3} near the boundary $\partial D.$  The control of $u$ near the boundary is obtained via the moving plane method, which requires the validity of a Hopf's type result for the stochastic problem \eqref{eq1}-\eqref{eq3}. However, for such a result to be proven the  $C^{1}$-spatial regularity  of $u$ is necessary which is established below.

For the purposes of the current section the positive nonlinearity $f(s)$ is assumed to be  increasing and convex, i.e.
\be\label{cond1}
f'(s),\;f''(s)\geq 0\quad\mbox{for}\quad s\in \IR.
\ee
\subsection{Spatial regularity of the solutions of \eqref{eq1}-\eqref{eq3}}
In the sequel, by following the approach introduced in    \cite{debdehof}, we  prove the spatial  $C^{1}$-spatial regularity of the solutions of   \eqref{eq1}-\eqref{eq3}. Such a result  will be used to derive the desired control of the solution near the  boundary. Before  we proceed with the proof we introduce the required functional framework.

 Let $C^{\bar{\alpha}, \bar{\beta}} (\bar{D} \times [0, T] ), 0 < \bar{\alpha} \leq 1, \; 0 < \bar{\beta} \leq 1$ denote the H\"older spaces equipped with the norm

\[
\| g \|_{C^{\bar{\alpha}, \bar{\beta}}}
= \sup_{( x,t)} | g(x,t)|
+ \sup_{(x,t) \ne ( y, s)} \frac{| g( x,t) - g(y,s)|}{|x-y|^{\bar{\beta}}+|t-s|^{\bar{\alpha}} }.
\]
With usual modifications, we can also consider the case for $\bar{\alpha}, \bar{\beta} \geq 1$. Note that it holds
\[
C^{\bar{\alpha}} \Big ( [0, T]; C^{\bar{\beta}}(\overline{D}) \Big  )
\nsubseteqq
C^{\bar{\alpha}, \bar{\beta}}  \Big ( \overline{D} \times [0, T]  \Big  ),
\]
and therefore we have to distinguish these two spaces.

Let, for any $p >1, r \geq 0$,
\[
H^{r,p}(D)
=
\left\{ h\big |\, \| h \|_{H^{r,p}(D)}
:= \inf \{ \| g \|_{H^{r,p} (\mathbb{R}^{d})}, \; g |_{D} = h\}<\infty \right\},
\]
where, the so called Bessel potential space, is defined as
\[
H^{r,p} ( \mathbb{R}^{d})
= \Big \{ h\big |\, ( I - \Delta)^{r/2} h \in L^{p}( \mathbb{R}^{d}), \;  \Big  \},
\]
where
$$
(I - \Delta)^{r/2} h: = \mathcal{F}^{-1} \Big ( ( 1+ | \xi |^2)^{r/2} \hat{h} \Big ).
$$
Here  $ \hat{h}$ denotes the Fourier transform of $h$, i.e., $\hat{h}  = \mathcal{F} (h)$, and $\mathcal{F}^{-1}$ denotes the inverse Fourier transform.
The choice of this scale of function spaces is more natural for our method than the standard Sobolev spaces $W^{r,p} ( D),  p >1, r \geq 0,$  cf. \cite{debdehof}. The spaces  $H^{r,p} (D)$ are generally different from the Sobolev spaces $W^{r,p}(D).$ However, the two following cases can occur
\bgee
W^{r,p}(D)=H^{r,p}(D)\quad\mbox{if}\quad r\in \mathbb{N}_0, p\in[1,\infty)\quad\mbox{or}\quad r\geq 0,\;  p=2,
\egee
and
\bgee
H^{r+\varepsilon,p}(D) \hookrightarrow W^{r,p}(D) \hookrightarrow H^{r-\varepsilon,p}(D),\; r\in \mathbb{R}, \;p\in (1, \infty), \;\varepsilon>0.
\egee
Furthermore, if $D$ is sufficiently regular, as in our case, then $H^{r,p}(D)$ coincides with the space of  restrictions of functions in $H^{r,p}(\mathbb{R}^d)$ to $D$ and thus the Sobolev embedding theorem holds true. Then the spaces  $H_0^{r,p}(D), r\geq 0, p\in(1,\infty),$ are defined as the closure of $C_c^{\infty}(D)$ in $H^{r,p}(D).$ Note that $H_0^{r,p}(D)=H^{r,p}(D)$ whenever $r\leq 1/p,$ while  $H_0^{r,p}(D)$ is strictly contained in  $H^{r,p}(D)$  if  $r> 1/p.$

Finally, it is worth noting  that the Bessel potential spaces  $H^{r,p} (D), \, p \geq 2, r  > 0$   are well suited for the stochastic It\^o integration (see \cite{brz} for the precise construction of the stochastic integral).

Notably, in order to obtain the desired $C^{1}$-spatial regularity for the solution of \eqref{eq1}-\eqref{eq3}, we need some further restrictions on the diffusion operator $\sigma.$  Indeed, we consider  the following assumption:

 $ (\sigma) \quad $  $\sigma: H \to L_{2}^{0}(H, H^{r,p}(D))$ satisfies the linear growth condition, i.e.,
\[
\| \sigma (u) \|_{L_{2}^{0}(H, H^{r,p}(D))}
\leq
 C ( 1+ \| u \|_{H^{r,p}(D)} ), \quad \mbox{for} \;  p \geq 2, \quad\mbox{and}\quad  0 \leq r \leq 1.
\]
Then the following regularity result can be proved by using the approach demonstrated in \cite[Proposition 5.1]{debdehof}.  For readers' convenience and for the sake of completeness we provide below  a complete proof adjusted to the stochastic  problem  \eqref{intro1}-\eqref{intro2}.
\begin{thm} {\bf ($C^{1}$-spatial regularity )} \label{t4}
Let us consider that all assumptions of Theorem \ref{existence} hold true.
Further assume that the  condition $(\sigma)$ holds and that $f$ satisfies \eqref{cond0}. If $\,\xi\in L^m\left(\Omega;C^{1+l}(\overline{D})\right),$ for $\;m\geq 2,\;l>0,$ then for all $\alpha \in (0, 1/2)$, there exists $\beta >0$ such that
\be\label{reg}
u  \in L^{m} \left ( \Omega; C^{1+ \beta,\alpha} \left ( \overline{D} \times[0, T]   \right ) \right ), \quad\mbox{for any}\quad m\geq 2.
\ee
\end{thm}
\begin{proof}
 We first  show that, there exists $\eta >0$  such that
\bge\label{ps4a}
u \in L^{m} \Big ( \Omega; C^{\eta} ( \overline{D}\times [0, T]  ) \Big ), \quad\mbox{for any}\quad m \geq 2.
\ege
Set $u= y + z$, where $z$ solves the following linear SPDE 
\begin{align}
& dz  = -A z   \, dt + \sigma (u) \,\; d W(t),\; 0<t<T,\notag \\
&z(0) = 0, \notag
\end{align}
whilst $y$ is the unique solution of the linear deterministic PDE problem
\begin{align}
& \frac{dy}{dt}   = -A y +  F(u),\,\;  0<t<T,\notag \\
& y(0) = \xi. \notag
\end{align}
{\it Step 1.  H\"older regularity of $z$.}\rm\; By Theorem \ref{existence}, the weak solution  $u$ of \eqref{intro1}-\eqref{intro2} belongs to $L^{m}\Big( \Omega; L^{m}((0, T); L^{m}( D)) \Big ), m \geq 2.$  Then the assumption  $(\sigma)$ with $ r=0$   implies that $\sigma (u)$ belongs to $L^m\left(\Omega; L^m\left((0,T);  L_{2}^{0} (H, L^m (D) )\right)\right).$
Hence the H\"older's regularity for the stochastic integral
\bgee\label{ps0}
z= \int_{0}^{t} E(t-s) \sigma( u(s)) \, d W(s),
\egee
is easily obtained. Indeed, using the linear growth of $\sigma$ and the factorization method, see \cite[Corollary 3.5]{brz}, we have
\[
\mathbb{E}\left[ \| z \|^m_{C^{\gamma} ( [0, T]; H_0^{\delta,m}(D))}\right]
\leq C \Big (
1 + \mathbb{E} \| u \|^m_{L^{m}((0,T);L^m(D))} \Big ),
\]
where
$ \gamma \in [0, \frac{1}{2} - \frac{1}{m} - \frac{\delta}{2}), \; \delta \in (0, 1- \frac{2}{m}), m>2$.
Now assume that $m \geq 3$, then $\delta = \frac{1}{6}, \gamma = \frac{1}{12}$ satisfy the conditions above uniformly.
Choose $m \geq m_{0}= 7d$, where $ d$ is the spatial dimension and also take $\alpha = \delta - \frac{d}{m_{0}},$ then by Sobolev's embedding theorem, we have
\[
H^{\delta, m}(D)  \hookrightarrow C^{\alpha}(D),
\]
since $\delta - \frac{d}{m} > \delta - \frac{d}{m_{0}}= \alpha$.   Thus for any $m \geq m_{0}$,
\[
\mathbb{E}\left[\| z \|^m_{C^{\gamma} ([0, T]; C^{\alpha}(D))}\right]
\leq C \Big ( 1 + \mathbb{E}\left[ \| u \|^m_{L^{m}((0, T); L^{m}(D))} \right]\Big )< \infty.
\]
On the other hand, for $m \in [2, m_{0})$, we have
\[
\mathbb{E} \left[\| z \|^m_{C^{\gamma} ([0, T]; C^{\alpha}(D))}\right]
\leq \left (
\mathbb{E} \left[\| z \|_{C^{\gamma} ([0, T]; C^{\alpha}(D))}^{m_{0}}\right] \right )^{m/m_{0}} < \infty.
\]
Thus for any $m \geq 2$, we have
\be\label{ps5a}
\mathbb{E} \left[\| z \|_{C^{\gamma} ([0, T]; C^{\alpha}(D))}^{m} \right]< \infty.
\ee
{\it Step 2. H\"older regularity of $y$.}\rm\;
Due to Lemma \ref{lip}, the functional $F$  satisfies a locally Lipschitz condition  and hence the following estimate is valid
\bge\label{ik1a}
\mathbb{E} \left[\| F(u) \|_{L^{r}\left((0,T);L^{r}(D)\right)}^{r}\right] \leq C \left( 1+ \mathbb{E} \left[\| u \|_{L^{r}\left((0,T);L^{r}(D)\right)}^{r}\right] \right)<\infty,
\ege
for any $r \geq 2$  by virtue of Theorem \ref{existence}.

Now  choosing $ \; r \geq 2$ such that $\frac{2+ d}{r} < \frac{1}{2},$
we have by classical parabolic PDE theory (see Theorems 7.1 and 10.1 in \cite{ladsolura}),
\[
\| y \|_{C^{\alpha, \alpha/2}(\overline{D}\times [0, T])}
\leq C \Big (
1+ \| \xi\|_{C^{l}(\overline{D})} \Big )
\Big (
1 + \| F(u) \|_{L^{r}\left((0,T);L^{r}(D)\right)}^{2d +1} \Big ), \quad  r \geq 2,
\]
for  some $\alpha >0$ and thus
\[
\| y \|_{C^{\alpha, \alpha/2}(\overline{D}\times [0, T])}^{m}
\leq C \Big (
1+ \| \xi\|_{C^{l}(\overline{D})}^{2m}  \Big )
\Big (
1 + \| F(u) \|_{L^{r}\left((0,T);L^{r}(D)\right)}^{r} \Big ),
\]
provided that $ 2 (2d +1) m <r.$

Since $r$ is arbitrary in $[2, \infty)$, then \eqref{ik1a} implies that
\be\label{ps5}
\mathbb{E}\left[ \| y \|_{C^{\alpha, \alpha/2}(\overline{D}\times [0, T])}^{m}\right]
< \infty, \quad \mbox{for any}\quad \; m \in [2, \infty).
\ee
Choose now $ \eta = \min \{ \frac{\alpha}{2}, \gamma, \lambda \} >0 $, then taking into account \eqref{ps5a} and \eqref{ps5} we derive estimate \eqref{ps4a}. \\
{\it Step 3. Higher spatial H\"older regularity of $z$.}\rm\;
Given estimate \eqref{ps4a} and using also Sobolev's embedding theorem we conclude that  $u\in L^m\left(\Omega; L^m\left((0,T);H^{k,m}(D)\right)\right)$ for $k<\eta < 1/2,$ hence by the assumption $(\sigma)$, we have $$\sigma(u)\in L^m\left(\Omega; L^m\left((0,T);L_{2}^{0} (H, H^{k,m}(D))\right)\right).$$ Using again the  factorization method \cite[Corollary 3.5]{brz}, we obtain
\[
\mathbb{E}\left[ \| z \|^m_{C^{\gamma} ( [0, T]; H^{\delta+k,m}(D))}\right]
\leq C \Big (
1 + \mathbb{E} \left[\| u \|^m_{L^{m}((0,T);H^{k,m}(D))}\right] \Big ),
\]
 where $ \gamma \in [0, \frac{1}{2} - \frac{1}{m} - \frac{\delta}{2})$  and $\delta \in (0, 1- \frac{2}{m})$  for any  $m>2$.  In the sequel  we assume $m \geq m_{0}:=(d+4)/k$ and thus $\delta=1- 3/m_{0}$ and $\gamma = 1/(4 m_{0})$ satisfy the conditions above uniformly in $m \geq m_{0}.$   Notably we have that $(\delta+k)m>km\geq k m_0\geq d$ and thus the following Sobolev embedding holds true:
\[
H^{\delta+k,m}(D)  \hookrightarrow C^{\theta}(D), \;\mbox{for}\; \theta = k+ \delta - d/m_{0}.
\]
Moreover via the definition of $\delta$ we have  $\theta=k+1-\frac{d+3}{m_0}>1.$

Therefore, we finally deduce
\bgee
\mathbb{E}\left[ \| z \|^m_{C^{\gamma} ( [0, T]; C^{\theta}(D))}\right]
\leq C \Big (
1 + \mathbb{E} \left[\| u \|^m_{L^{m}((0,T);H^{k,m}(D))}\right] \Big ), \quad m \geq 2,
\egee
and for some $0< \gamma <1/2,$ that is
\be\label{ps7}
z\in L^m\left(\Omega;C^{\theta,\gamma} (\overline{D}\times [0, T]  )\right).
\ee
{\it Step 4. Higher spatial H\"older regularity of $y$.}\rm\;
Next, taking estimate \eqref{ps4a} as starting point and using Schauder's theory for deterministic parabolic PDEs \cite[Theorem 6.48]{lb} as well as the linear growth condition on non-local term $F$ we derive
\[
\| y \|_{C^{1+\alpha, (1+\alpha)/2} ( \overline{D}\times[0, T] )}^{m}
\leq C \Big (
1+ \| \xi\|_{C^{1+l}(\overline{D})} + \| F(u(t)) \|_{L^{r}\left((0,T);L^{r}\right(D))}^{r} \Big ),
\]
for $r\geq 2$ large enough. Hence
\be\label{ps8}
y\in L^m\left(\Omega;C^{1+\alpha, (1+\alpha)/2} ( \overline{D}\times[0, T] )\right), \; m \geq 2,
\ee
which combined with \eqref{ps7} implies
\bgee
u\in L^m\left(\Omega;C^{1+\beta_{1}, \gamma} (\overline{D}\times [0, T]  )\right),
\egee
with $\beta_{1} = \min \{ \theta -1, \alpha \}.$\\
{\it Step 5. Time regularity.}\rm\;  For any $ \gamma \in (0, 1/2)$,  due to \eqref{ps8}, it suffices to improve only the time regularity of $z.$
By following the same arguments employed in step 1 for the stochastic integral and using estimate \eqref{ps4a} we deduce
\[
\mathbb{E}\left[ \| z \|^m_{C^{\gamma} ( [0, T]; H^{1+k,m}(D))}\right]
<\infty,
\]
which, via the Sobolev embedding $H^{1+k,m}(D)  \hookrightarrow C^{1+\beta}(D),\; \beta<k,$ implies that
$$
z\in L^m\left(\Omega;C^{1+\beta_{1}, \gamma} (\overline{D}\times [0, T]  )\right), \; m \geq 2.
$$
Combining now the above estimate with \eqref{ps8} we obtain the desired regularity for $u$ and
the proof of Theorem \ref{t4} is complete.
\end{proof}
\begin{rem}
For the purposes of the current work the spatial regularity provided by Theorem \ref{t4} is sufficient. Nevertheless, under the assumption that the drift  term $F(u)$ is bounded, which is guaranteed by  \eqref{cond0} and \eqref{cond1}, we can get a higher spatial regularity for the solution $u$ of \eqref{intro1}-\eqref{intro2}. In particular, in that case for all $\alpha \in (0,1/2)$ there exists $\beta>0$ such that
\bge
u  \in L^{m} \left ( \Omega; C^{ 2+ \beta, \alpha} \left ( \overline{D}\times [0, T]   \right ) \right ), \;m\geq 2,
\ege
provided also that  $\xi\in L^2\left(\Omega;C^{2+l}(\overline{D})\right).$ Indeed, we can increase the spatial regularity of $u$ as long as we consider smoother initial data $\xi$ and smoother non-local terms $F.$ For more details see Propositions 5.2 and 5.3 in \cite{debdehof}.
\end{rem}

\subsection{Strong positivity and Hopf's lemma}\label{ntbu}

According to the approach introduced in \cite{kavtza}, the proof of the finite-time blow-up for the deteministic problem \eqref{ndp1}-\eqref{ndp3} requires a key estimate of the solution close to the spatial boundary, which is heavily based on Hopf's boundary  lemma. For proving  a  reaction (drift) term induced blow-up for the stochastic problem  \eqref{eq1}-\eqref{eq3}  we would like to adjust a similar approach with the deterministic case and thus  a  Hopf's type lemma  in the context of  SPDEs should be established.


For readers' convenience we first give a required definition as well as we recall Hopf's maximum principle for deterministic parabolic PDEs, see also \cite{fri, pw, smo}.
\begin{defn}(\cite{fri}) \label{issp}
Let $P_{0}=(x_0,t_0)$ be a point on the boundary of $D_T.$ If there exists a closed ball $B$ centered at $(\bar{x}, \bar{t})$ such that
\[
B \subset \ol{D}_T, \quad B \cap \partial D_T = \{ P_{0} \}, \; \bar{x} \ne x_{0},
\]
then we say that $P_{0}$ has the inside strong sphere condition.
\end{defn}
Note that the inside strong sphere condition automatically holds when $\Gamma_T$ is $C^2.$

 The following strong positivity result is a  key result for proving a Hopf's type lemma.
\begin{thm}(Strong positivity)\label{max}
 Let $V\equiv V(x, t;  \omega)$ be a weak  solution of the following stochastic problem
\bge
&&\frac{\pl V}{\pl t}=\Delta V+ \chi(V)+ \sigma(V) \partial_t W,\; (x,t)\in D_T,\label{yy1}\\
&& V(x,t)=0,\quad (x,t)\in \Gamma_T,\label{yy2}\\
&& V(x,0)=V_0(x),\quad x\in D.\label{yy3}
\ege
Let also $\sigma:H\to L_{2}^{0}(H, H)$ be a Lipschitz continuous function satisfying condition $(\sigma)$ with $\sigma(0)=0.$ Assume further that $\chi> 0$ with $\chi\in L^r((0,T);L^r(D))$ for some $r\geq 2.$  In addition we consider initial datum $V_0$ which is Holder continuous and satisfies $V_0>0$  a.s. in $D$ with $V_0=0$ on $\partial D. $  Then
\[
\IP\left\{V (x, t) > 0 \quad \mbox{in}\quad D\times[0,T]\right\}=1,
\]
that is 
\bgee
V>0\quad\mbox{a.s.  in}\quad  D\times[0,T].
\egee
\end{thm}
A proof of Theorem \ref{max} can be found either in \cite{mue}, see Theorem 6.13, or in \cite[Theorem 5.1.]{kho}.
\begin{rem}
Theorem \ref{max} actually implies that the solution $V$ of problem \eqref{yy1}-\eqref{yy3} attains its zero minimum along $\overline{D}_T$ only on the boundary $\Gamma_T,$  also due to the boundary condition \eqref{yy2}.
\end{rem}
Next we provide a Hopf's lemma for  semilinear parabolic SPDEs. In particular, the following holds:

\begin{thm} (Hopf's Lemma) \label{max2}
Let $V$ be a weak solution of the problem \eqref{yy1}-\eqref{yy3}
where again functions $\chi:H \to H,\;\sigma:H\to L_{2}^{0}(H,H)$ satisfy the same assumptions as in Theorem \ref{max}. Consider initial condition  $V_0$ which is Holder continuous and satisfies $V_0>0$  a.s. in $D$ with $V_0=0$ on $\partial D. $
Assume also that $\Gamma_T$ is smooth enough, e.g $\Gamma_T$ is $C^2,$ such that it has the inside strong sphere property,
then
\bge\label{fcov}
\frac{\partial V}{\partial \nu}\Big|_{P_{0}}<0,
\ege
for any $P_0\in \Gamma_T:=\pl D\times (0,T).$  Notably the notion of the derivative into \eqref{fcov}  should be understood in the classical sence since $V$  is $C^{1}$ with repsect to the spatial variable due to Theorem \ref{t4}.
\end{thm}
\begin{proof}
Let  $P_{0}=(x_0,t_0)\in \Gamma_T:=\partial D\times (0,T),$ then since $\Gamma_T$ is smooth enough so it has the inside strong sphere property, we can then construct a closed ball $B$ centered at  $(\bar{x}, \bar{t})\neq (x_0,t_0)$ and with radius $R$ such that
\[
B \subset \ol{D}_T, \quad B \cap \Gamma_T = \{ P_{0} \}, \; \bar{x} \ne x_{0},
\]
i.e., the ball $B$ is tangent to $\Gamma_T$ at the point $P_{0}.$ Without loss of generality we may assume that the interior of $B$ lies in $D_T \cap \mathcal{V}$ for some neighborhood $\mathcal{V}$ of $P_{0}.$ We also consider a ball $B_1$ centered at $P_0$  and of radius  $\rho<|x_0-\bar{x}|,$ see Fig.~\ref{fig1}.
\begin{figure}[b]
\begin{center}
\includegraphics[scale=2]{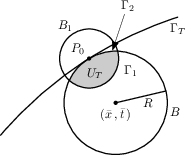}
\end{center}

\caption{The inside strong sphere condition configuration}
\label{fig1}       
\end{figure}

Let $\Gamma_1=\partial B_1\cap \overline{B}$ and $\Gamma_2=\partial B\cap B_1$ and let  $U_T$ be the region enclosed by the curves $\Gamma_1$ and $\Gamma_2.$ Since, by Theorem \ref{max}, $V>\min_{\overline{D}_T} V=0$  a.s. on $\Gamma_1$ then we can find $\eta>0$ such that
\begin{itemize}
\item $(i)$  $V\geq \eta>0\quad\mbox{on}\quad \Gamma_1,$  a.s.
\item $(ii)$ $V>0\quad\mbox{on}\quad \Gamma_2\setminus \{P_0\},$  a.s.  and
\item $(iii)$ $V(P_0)=0,$  a.s.,
\end{itemize}
see also Fig.~\ref{fig1}.

Consider now 
the auxiliary deterministic function  $h(x, t)$ defined by
\[
h(x, t) =  e^{-\alpha R^2}-e^{-\alpha \big ( | x- \bar{x}|^2 + (t- \bar{t})^2 \big )}<0,\quad \alpha>0,\quad\mbox{for}\quad (x,t)\in U_T,
\]
Evidently $h=0$ on $\pl B,$ and by selecting $\alpha$ sufficiently large we can attain $\mathcal{H}(h)=\frac{\partial h}{\partial t}-\Delta h:=g(x,t)<0$ in $U_T,$  see also \cite{smo}. There also holds 
\bge\label{mmks1}
\frac{\partial h}{\partial \nu}\Big|_{P_{0}}=2 \alpha R e^{-\alpha R^2}>0.
\ege
Let now $\Theta:= V + \varepsilon h,\;\varepsilon>0,$ 
then, in view of $(i),$ we can find $\varepsilon$ small enough such that $\Theta>0$ on $\Gamma_1$  a.s. .  Furthermore, by virtue of $(ii)$ and $(iii),$ along with the fact that $h=0$ on $\pl B,$ we derive $\Theta>0$  a.s. on $\Gamma\setminus\{P_0\}$ and $\Theta(P_0)=0$  a.s. . Note also that  $\Theta$ is a weak solution of
\bgee
&&\frac{\pl \Theta}{\pl t}=\Delta \Theta+\widetilde{\chi}(\Theta) +\sigma(\Theta-\varepsilon h) \partial_t W,\quad\mbox{in}\quad U_T,\\
&& \Theta(x,t)=0,\quad (x,t)\in \partial B,\;\\
&& \Theta(x,0)=\Theta_0(x),\quad x\in U_0,
\egee
where $\widetilde{\chi}(\Theta):=\chi(\Theta-\varepsilon h)+\varepsilon g(x,t)>0$  in $U_T$ and  $\Theta_0(x)>0$  in $U_0$ for choosing $\varepsilon$  small enough.

Accordingly, by virtue of Theorem \ref{max} we deduce that the minimum of $\Theta$ in $\overline{U}_T$ is attained only at $P_0.$

Therefore, since by Theorem \ref{t4} we have that $\Theta$ is $C^{1}$ with respect to the spatial variable on the boundary of $U_T,$  so  we finally  deduce
\[
\frac{\partial \Theta}{\partial \nu}\Big|_{P_{0}} \leq 0,\quad\mbox{a.s.},
\]
or equivalently
\bge\label{mkt1}
\frac{\partial V}{\partial \nu}+\varepsilon \frac{\partial h}{\partial \nu}     \leq 0,\quad\mbox{a.s.}, \quad \mbox{at} \; P_{0}.
\ege

Therefore, \eqref{mkt1} in  conjunction with  \eqref{mmks1} entails
\[
\frac{\partial V}{\partial \nu}\Big|_{P_{0}} <0,\quad\mbox{a.s.},
\]
and the proof  is complete.
\end{proof}
\begin{rem}
The result of Theorem \ref{max2} is still valid if instead of the outward normal direction at $P_0$ another outward direction is considered apart from the tangential one.
\end{rem}

\subsection{Estimates near the boundary}\label{eb}
In order to tackle the difficulties arising from the presence of the non-local term $K(t)=\left(\int_{D} f(u(x, t))\,dx \right)^{-q},
$ in \eqref{eq1}-\eqref{eq3} we need to estimate the contribution of  $u(x,t)$ near the boundary. For that purpose we will use the moving plane method as in \cite{kavtza}, which is actually inspired by the results in the seminal paper by  Gidas et al. \cite{gnn}. Although most of the implemented arguments are quite standard in the context of deterministic PDEs, since it is the first time that those ideas are employed for SPDEs a detailed proof is provided.
\begin{lem}\label{moving}
Let $u(x,t)$ be the solution of \eqref{eq1}-\eqref{eq3} with initial data $\xi \in L^2(\Omega;L^{\infty}(D))$ satisfying $ \, \xi >0 $ a.s.   in $D$  with $\xi=0$ on $\partial D.$  Assume further that the nonlinearity $f$ is an increasing function as well as $D\subset \IR^d,\;d\geq 1,$ is convex and smooth enough as in Theorem \ref{max2}. Then there exists $\ol{D}_{0} \varsubsetneq D$ such that
\[
\int_{D}f(u) \, dx \leq (\ell+1) \int_{D_{0}} f(u) \, dx,\quad\mbox{for all}\quad 0<\widetilde{t}_0\leq  t <T,\quad\mbox{a.s.},
\]
where $\ell$ is some positive integer.
\end{lem}
\begin{proof}
For any $y\in \pl D$ we define the hyperplane
$$\mathcal{T}(\mu,y):=\left\{x\in \IR^d:(x,\nu(y))_{d}=\mu\right\},$$
where $(\cdot,\cdot)_d$ stands for the inner product in $\IR^d.$

Then we can find $\mu_0$ such that $\mathcal{T}(\mu_0,y)$ coincides with the tangent hyperplane to $D$ at $y$ and $y\in \mathcal{T}(\mu_0,y)\cap \overline{D}$ (note that when $D$ is strictly convex then $\mathcal{T}(\mu_0,y)\cap \overline{D}=\{y\}$), see Fig.~\ref{fig2}.

Since $D$ is a bounded set there exists $\mu_1<\mu_0$ such that $\mathcal{T}(\mu,y)\cap \overline{D}=\emptyset$ for $\mu>\mu_0$ and $\mu<\mu_0-\mu_1.$

We define
$$\Sigma(\mu,y):=\{x\in D: \mu< (x, \nu(y))_d <\mu_0 \},$$
while by $\Sigma^{\,'}(\mu,y)$ we denote the reflection of $\Sigma(\mu,y)$ across $\mathcal{T}(\mu,y).$ Now using the convexity of $D$  we can choose $\bar{\mu}$ sufficiently close to $\mu_0$ so that $\Sigma^{\,'}(\bar{\mu},y)\subset D,$ see also Fig.~\ref{fig2}.

Applying  now Theorem \ref{max2}, since all its hypotheses are satisfied (see also Theorem \ref{t4}),  we deduce that for any $y\in \pl D$
$$\frac{\pl u(y,t)}{\pl \nu}=(\nabla u(y,t), \nu(y))_d<0,\quad\mbox{a.s.,}\quad\mbox{for any}\quad t\geq t_0>0.$$
By the spatial regularity of $u,$  see \eqref{reg}, we can find a neighbourhood of $y,$ say $\mathcal{N}_y,$ such that
$$\frac{\pl u(x, t_0)}{\pl \nu}=(\nabla u(x,t_0), \nu(y))_d<0,\quad\mbox{a.s.,}\quad\mbox{for any}\quad x\in \mathcal{N}_y.$$
We consider now a coordinate system centered at $y$ and defined by  $(y;\nu(y), \mathcal{T}(\mu_0,y))$ such that every $x\in \IR^d$ is expressed as $x=(x_{\nu},x_{\mathcal{T}}),$ where $x_{\nu}$ is the component in the direction of $\nu(y)$ while $x_{\mathcal{T}}$ stands for the component in the direction of the hyperplane $\mathcal{T}(\mu_0,y).$

Let us define the cylinder
$C_\de(y)=\{y=(x_{\nu}, x_{\mathcal{T}})\in \IR^d\big |\,|x_{\nu}|<\de,\;|x_{\mathcal{T}}|<\de \}.$
We may pick $\delta>0$ small enough so that the reflection of $\ol{C_\de(y)}\cap D$ across $\mathcal{T}(\bar{\mu},y),$ denoted by $C_\de^{\;'}(y),$ is compact in $D.$

Set $K_y=\mathcal{T}(\mu_0,y)\cap \overline{D},$ then $K_{y}$ is a compact convex set and
$K_y=\bigcap_{\mu<\mu_0} \Sigma(\mu,y).$
Every $\hat{y}\in K_{y}$ has the same exterior normal $\nu(y).$ Then we can define an open neighbourhood of $\hat{y}$ of the shape $C_{\de}(\hat{y})$ and on which $(\nabla u(\hat{y},t_0),\nu(y))_d<0$ almost surely (a.s.).  Moreover, $K_y\subset \bigcup_{\hat{y}} C_{\de}(\hat{y})$
and since $K_y$ is compact we can extract a finite cover of $C_{\de}(\hat{y}),$ say
$E=\bigcup_{i=1}^{n}C_{\de}(\hat{y_i})$
which contains $K_y,$ for some positive integer $n=n(y).$

\begin{figure}[b]
\begin{center}
\includegraphics[scale=1.2]{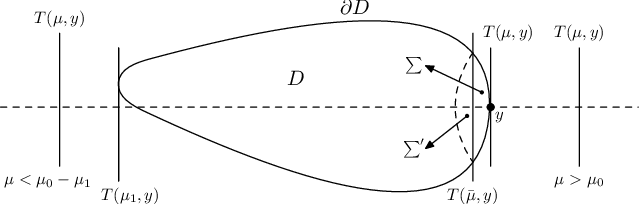}
\end{center}

\caption{The moving plane parallel configuration}
\label{fig2}       
\end{figure}

Since $D$ is convex we can find $\mu<\mu_0$ such that $\Sigma (\mu,y)\subset E$ and $\Sigma^{\,'}(\rho_0,y)\subset D,\; \Sigma(\rho_0,y)\cup\Sigma^{\,'}(\rho_0,y)\subset E$ for $\rho_0=\frac{\mu+\mu_0}{2}.$ (Note that if $D$ is strictly convex then the above construction is unnecessary).

We now set $z(x,t)=z(x_{\nu},x_{\mathcal{T}},t)=u(2\rho_0-x_{\nu},x_{\mathcal{T}},t)$ for $x\in \Sigma(\rho_0,y);$ actually $z$ is the reflection of $u$ across $T(\rho_0,y).$ Then $z$ is a weak solution of
\bgee
&& \frac{\pl z}{\pl t}  = \Delta z +K(t) f(z)+ \sigma (z) \partial_{t} W(x,t),\quad\mbox{on}\quad\Sigma(\rho_0,y)\times (\widetilde{t}_0,T_{max})\\
&& z\geq u\geq 0\quad\mbox{on}\quad  K_1:=\left(\pl D\cap \Sigma(\rho_0,y)\right)\times(\widetilde{t}_0,T_{max}),\\
&& z=u\quad \mbox{on}\quad  K_2:=\left( D\cap \mathcal{T}(\rho_0,y)\right)\times(\widetilde{t}_0,T_{max}).
\egee
Consequently $z$ and $u$ satisfy in a weak form the same SPDE on $\Sigma(\rho_0,y)\times (\widetilde{t}_0,T_{max})$ while
$z\geq u$ on $K_1\cup K_2$ and $z(\cdot,t_0)\geq u(\cdot,t_0)$ on $\Sigma(\rho_0,y)$ almost surely (a.s.),  hence by the comparison principle, \cite[Section 5.1]{cpt}, we deduce that $z\geq u$ almost surely (a.s.) on $\Sigma(\rho_0,y)\times (\widetilde{t}_0,T_{max}).$

Note that $\Sigma(\rho_0,y)$ contains an open set of the type $C_{\de}(y)\cap D$ and if we choose $\de<\mu_0-\rho_0$ then the reflection of $C_{\de}(y)\cap D$ across $\mathcal{T}(\rho_0,y)$ has a compact closure in $D.$ We can repeat the above construction for any $y\in \pl D$ and the collection of all cylinders $\{C_{\de}(y)\}_{y\in \pl D}$ builds up an open cover of $\pl D$ from which we can extract a finite subcover denoted by $C_{\de}(y_1),...,C_{\de}(y_{\ell})$ such that $\pl D\subseteq C_{\de}(y_1)\cup...\cup C_{\de}(y_{\ell}).$

Set $D_0=\displaystyle{D\setminus\bigcup_{i=1}^\ell C_{\de}(y_i)},$ then $\overline{D}_0\subset D$ and we have
\bgee
\int_{D} u \, dx
&&\leq
\int_{D_{0}} u \, dx
+
\sum_{i=1}^{\ell} \int_{C_{\de}(y_{i})\cap D} u\, dx  \leq \int_{D_{0}} u \, dx
+
\sum_{i=1}^{\ell} \int_{C_{\de}(y_{i})\cap D} z \, dx \notag \\
&& \leq
 \int_{D_{0}} u \, dx
+
\sum_{i=1}^{\ell} \int_{C^{\prime}_{\de}(y_{i})} z \, dx  =
 \int_{D_{0}} u \, dx
+
\sum_{i=1}^{\ell} \int_{C^{\prime}_{\de}(y_{i})} u \, dx \notag \\
&& \leq
\int_{D_{0}} u \, dx
+ \ell \int_{D_{0}} u \, dx \leq (\ell+1) \int_{D_{0}} u \, dx\quad\mbox{a.s.},  \notag
\egee
taking also into account that $u\leq z\; \mbox{on} \; C_{\de}(y_{i})\cap D\;$ and $\;u= z \quad \mbox{on} \; C^{\prime}_{\de}(y_{i})$  a.s.  by reflection.

Now since $f(s)$ is increasing we finally  deduce
\[
\int_{D}f(u) \, dx \leq (\ell+1) \int_{D_{0}} f(u) \, dx\quad\mbox{a.s.},
\]
and the proof of lemma  is now complete.
\end{proof}
\subsection{Finite-time blow-up}\label{bnlt}
Henceforth, the nonlinearity $f(s)$ is imposed to satisfy
\be\label{cond2}
[f^{1-q}(s)]''\geq 0\quad\mbox{for}\quad s\in \IR \quad\mbox{and}\quad\int_{b}^{\infty} \frac{ds}{f^{1-q}(s)}  < \infty, \quad\mbox{for any} \, b\in \IR.
\ee
We first prove a blow-up result when the parameter $\la$ is large enough.
\begin{thm}\label{tbu1}
Suppose that  \eqref{eq1}-\eqref{eq3} has
a (unique) local-in-time solution $u$ whose existence is provided by Theorem \ref{existence} . Assume further that the nonlinearity $f$ satisfies conditions \eqref{cond1} and \eqref{cond2} as well as $D\subset \IR^d,\;d\geq 1,$ is convex and smooth enough as in Theorem \ref{max2}.  Then $u$ blows up in finite time for sufficiently large values of the parameter $\lambda$, provided that $\xi\in L^2(\Omega;H)$ with $\xi(x)> 0$  a.s.  in $D$  and  $\xi=0$ on $\partial D.$
\end{thm}
\begin{proof}
Let us define $\widehat{u}(t)$ as in the proof of  Theorem \ref{ntbua}. Now taking the expectation over \eqref{bu1} we have
\bge
\mathbb{E}[ \hat{u} (t)]&&=\mathbb{E}\Big[ \int_{D}   \xi (x)\,\phi_{1}(x)  \, dx\Big]   -\la_1 \mathbb{E}   \Big[\int_{0}^{t} \int_{D}  u (x, s) \, \phi_{1} (x) \, dx\, ds \Big]\no \\
\qquad && +
\la\,\mathbb{E}\Big[ \int_{0}^{t} \int_{D} \frac{f(u(x, s))\, \phi_{1}(x)}{\left(\int_{D} f(u(x, s))\,dx \right)^{q}} \, dx \, ds\Big]\label{bu2}
\ege
taking also into account that
\[
\mathbb{E}\Big[\int_{0}^{t} \int_{D}  \sigma(u(x, s))\, \phi_{1}(x)\,dx \, dW(s)\Big] =0.
\]
For simplicity,  hereafter, we will write $u(x, t)$ and $\phi (x)$ as $u$ and $\phi$, respectively in the integrand.

Set $\Psi(t) = \mathbb{E}[ \hat{u} (t)],$ then by using again  Fubini's theorem, we deduce
\bge
\qquad  \Psi(t)=\Psi_0-\la_1\int_0^t \Psi(s)\,ds
+\la\,\mathbb{E}\Big[ \int_{0}^{t} \int_{D}\frac{f(u)\, \phi_{1}}{\left(\int_{D} f(u)\,dx \right)^{q}} \, dx \, ds\Big],\qquad\label{bu3}
\ege
where $ \Psi_0= \mathbb{E}\left[\left(\xi, \phi_{1}\right)_{H}\right],$ or equivalently the initial value problem
\bge
\frac{d\Psi}{dt} = - \lambda_{1} \Psi(t) +\la\, \mathbb{E}\Big[K(t)\int_{D} f(u)\, \phi_{1}\, dx \Big], \;\;t>0,\;\; \Psi(0)=\Psi_0.\qquad \label{e4_1}
\ege
By Lemma \ref{moving}, we can construct $D_{0}\subset D$ with $\overline{D}_{0} \varsubsetneq D$ such that
\[
\int_{D} f(u) \, dx \leq (\ell+1) \int_{D_{0}} f(u) \, dx,\quad\mbox{almost surely (a.s.)},
\]
for some $\ell \in \IN$. Let $m_1 = \inf_{ x \in D_{0}} \phi_{1}(x)$, then  since $\overline{D}_{0} \varsubsetneq D$  we have $m_1 >0$. Hence
\[
\int_{D} f(u) \, dx \leq \frac{\ell+1}{m} \int_{D_{0}} f(u)\, \phi_{1} \, dx \leq \frac{\ell+1}{m_1} \int_{D} f(u)\, \phi_{1}\, dx,\quad\mbox{almost surely (a.s.)},
\]
and so
\begin{equation}  \label{e4_2}
K(t)=\left( \int_{D} f(u) \, dx \right )^{-q} \geq L \left ( \int_{D} f(u)\, \phi_{1} \, dx \right )^{-q},\quad\mbox{almost surely (a.s.)},
\end{equation}
for
\be\label{con}
L= \left( \frac{m_1}{\ell+1} \right)^{q}.
\ee
Therefore by virtue of \eqref{e4_2} and  applying Jensen's inequality twice, since both $f(s)$ and $f^{1-q}(s)$ are convex functions, see also \eqref{cond1} and \eqref{cond2},  we deduce
\bge
\mathbb{E}\Big[K(t)\int_{D} f(u)\, \phi_{1} \, dx \Big]
&&\geq \mathbb{E}\left[L\,\left ( \int_{D} f(u)\, \phi_{1} \, dx \right )^{1-q}\right]\no\\
&&\geq L\,f^{1-q}\left(\mathbb{E}[\hat{u}(t)]\right)  =L f^{1-q}\left(\Psi(t)\right)\label{bu4}.
\ege
Thus by virtue of
\eqref{e4_1} and \eqref{bu4}  the differential inequality holds
\bgee \label{e4_3}
\frac{d\Psi(t)}{dt} \geq - \lambda_{1} \Psi(t) + \lambda L f^{1-q}\left(\Psi(t)\right),\;\;t>0,
\egee
with initial condition $\Psi(0)=\Psi_0.$

Define
$$0 < N := \sup_{s > \Psi(0)} \frac{s}{f^{1-q}(s)},$$
then due to \eqref{cond2} we have that $N<\infty,$ and so choosing  $\lambda >\frac{\la_1 N}{L},$ we deduce
\begin{align}
t
&  \leq
\int_{\Psi(0)}^{\Psi(t)} \frac{ds} {\lambda L f^{1-q}(s) - \lambda_{1} s}
\leq \frac{1}{\Lambda} \int_{\Psi(0)}^{\Psi(t)} \frac{ds}{f^{1-q}(s)}  < \frac{1}{\Lambda} \int_{\Psi(0)}^{\infty} \frac{ds}{f^{1-q}(s)} < \infty, \notag
\end{align}
for
\be\label{est}
0 < \Lambda \leq \lambda L- \la_1 N < \infty.
\ee
Thus $\Psi(t)$ blows up in finite time, i.e.
 $
\Psi(t) \to \infty
$ as $ t \to T_1$ where $T_1$  is estimated as
\be\label{est2}
T_1\leq \int_{\Psi(0)}^{\infty}  \frac{ds}{\lambda L f^{1-q}(s) - \lambda_{1} s} \leq \frac{1}{\Lambda} \int_{\Psi(0)}^{\infty} \frac{ds}{f^{1-q}(s)} < \infty.
\ee
Indeed,  since by Theorem \ref{t4} $u$ is bounded in $D\times [0,T),$  then  \eqref{nor} yields
\[
\Psi(t) = \mathbb{E}\Big[ \int_{D} u\, \phi_{1}(x) \, dx\Big] \leq \mathbb{E}\big[\| u \|_{\infty}\big],
\]
and thus
$\mathbb{E}\big[\| u \|_{\infty}\big] \to \infty$ as $t \to T^{*} \leq T_1.$
\end{proof}
Next  we prove that  blow-up also occurs for large enough initial data.
\begin{thm}\label{tbu2}
Suppose that  the assumptions of Theorem \ref{tbu1} hold true.
Assume also that
\be
\mathbb{E}\left[\int_{D} \xi\,\phi_1\,dx\right]>\zeta,
\ee
where $\zeta=\zeta(\la)$ is the largest root of the equation $$\alpha(s):=\la\,L\, f^{1-q}(s)-\la_1 s=0,$$ and $L$ is the constant given by \eqref{con}.
Then the solution $u$ of \eqref{intro1}-\eqref{intro2}  blows up in finite time.
\end{thm}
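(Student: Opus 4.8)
The plan is to recycle the scalar differential inequality already established in the proof of Theorem \ref{tbu1}. Setting $\Psi(t)=\mathbb{E}\big[\int_D u_t\,\phi_1\,dx\big]$ and reusing \eqref{e4_1} together with the two applications of Jensen's inequality that led to \eqref{bu4}, one arrives at the autonomous differential inequality
\be
\frac{d\Psi}{dt}\geq \la\,R\,f^{1-q}(\Psi(t))-\la_1\,\Psi(t)=\alpha(\Psi(t)),\quad t>0,\no
\ee
with $\Psi(0)=\Psi_0=\mathbb{E}\big[(\xi,\phi_1)_H\big]$. The entire difference with Theorem \ref{tbu1} is that now $\la$ is held fixed and the possible smallness of the nonlocal contribution is compensated by the largeness of the initial datum rather than by a large coupling constant.

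Next I would analyse the sign of $\alpha$. Because $f^{1-q}$ is convex and superlinear by \eqref{cond2}, we have $\alpha(s)/s=\la R f^{1-q}(s)/s-\la_1\to+\infty$ as $s\to\infty$, so $\alpha(s)\to+\infty$ and, whenever it changes sign, $\alpha$ admits a largest finite root $\zeta$. By continuity together with eventual positivity, $\alpha(s)>0$ for every $s>\zeta$. (If instead $\alpha>0$ on all of $[0,\infty)$ the hypothesis $\Psi_0>\zeta$ holds trivially and the argument below only simplifies.) Assuming $\Psi_0>\zeta$, the trapping step is that $\Psi(t)>\zeta$ throughout the existence interval: whenever $\Psi(t)>\zeta$ the inequality forces $d\Psi/dt\geq\alpha(\Psi(t))>0$, so $\Psi$ can never cross the barrier $\zeta$ downward and is in fact strictly increasing. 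Separating variables along this trajectory then gives
\be
t\leq \int_{\Psi_0}^{\Psi(t)}\frac{ds}{\alpha(s)}<\int_{\Psi_0}^{\infty}\frac{ds}{\alpha(s)}.\no
\ee

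Since $\Psi_0>\zeta$ strictly, the integrand is continuous and positive on $[\Psi_0,\infty)$, so the only concern is convergence at infinity. Superlinearity yields $\la_1 s/(\la R f^{1-q}(s))\to 0$, whence $\alpha(s)\geq \tfrac{1}{2}\la R f^{1-q}(s)$ for $s$ large, and the convergence of $\int^{\infty}ds/f^{1-q}(s)$ supplied by \eqref{cond2} gives $\int_{\Psi_0}^{\infty}ds/\alpha(s)<\infty$. Consequently $\Psi(t)\to\infty$ as $t\to T^*$ for some finite $T^*\leq\int_{\Psi_0}^{\infty}ds/\alpha(s)$, and exactly as in Theorem \ref{tbu1} the bound $\Psi(t)\leq\mathbb{E}[\|u_t\|_\infty]$ then forces finite-time blow-up. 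The main obstacle I anticipate is making the monotonicity/trapping argument rigorous: one only has a differential \emph{inequality}, so the explicit separation of variables cannot be invoked verbatim. The clean remedy is to compare $\Psi$ with the solution $\Phi$ of the associated autonomous ODE $d\Phi/dt=\alpha(\Phi)$, $\Phi(0)=\Psi_0$, via a standard scalar comparison lemma to obtain $\Psi(t)\geq\Phi(t)$; since $\Phi$ blows up precisely at the finite time $\int_{\Psi_0}^{\infty}ds/\alpha(s)$, so must $\Psi$. It is here that $\zeta$ being the \emph{largest} root is essential, as it rules out the degenerate scenario in which the trajectory stalls at an intermediate zero of $\alpha$.
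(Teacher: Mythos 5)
Your proposal is correct and follows essentially the same route as the paper: both reuse the differential inequality $d\Psi/dt\geq\alpha(\Psi)$ from Theorem \ref{tbu1} with fixed $\la$, exploit $\Psi_0>\zeta$ to keep $\alpha(\Psi)>0$, and conclude via $\int_{\Psi_0}^{\infty}ds/\alpha(s)\leq\Lambda_1^{-1}\int_{\Psi_0}^{\infty}ds/f^{1-q}(s)<\infty$. Your extra care with the ODE comparison lemma and the justification of the lower bound $\alpha(s)\geq\tfrac{1}{2}\la R f^{1-q}(s)$ for large $s$ only makes explicit what the paper leaves implicit in its constant $\Lambda_1$.
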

\begin{proof}
Following the same steps as in the proof of Theorem \ref{tbu1} we obtain that $\Psi(t) = \mathbb{E}\Big[ \int_{D} u\, \phi_{1}\, dx\Big]$ satisfies the differential inequality
\bgee \label{e43a}
\frac{d\Psi(t)}{dt} \geq - \lambda_{1} \Psi(t) + \lambda L f^{1-q}\left(\Psi(t)\right)=\alpha(\Psi(t)),\;\;t>0,
\egee
with $\Psi(0)=\Psi_0:=\mathbb{E}\left[\int_{D} \xi\,\phi_1\,dx\right].$

Let $\zeta=\zeta(\la)$ be the largest root of the equation $\alpha(s)=0.$  Then by choosing $\Psi_0>\zeta$ and using again \eqref{cond2} we deduce
$$
 t\leq \int_{\Psi_0}^{\Psi(t)} \frac{ds}{\alpha(s)} \leq \int_{\Psi_0}^{\infty} \frac{ds}{\alpha(s)}\leq \frac{1}{\Lambda_1} \int_{\Psi_0}^{\infty} \frac{ds}{f^{1-q}(s)}<\infty,
$$
for some positive constant $\Lambda_1.$ But the above relation entails  that $\Psi(t)$ blows up in finite time $T_1<\infty,$ where
$$
T_1 \leq \frac{1}{\Lambda_1} \int_{\Psi_0}^{\infty} \frac{ds}{f^{1-q}(s)}<\infty,
$$
which, similarly to Theorem \ref{tbu1}, implies that $\mathbb{E}\big[\| u \|_{\infty}\big] \to \infty$ as $t \to T^{*} \leq T_1.$
\end{proof}
\begin{rem}
Theorems \ref{tbu1} and \ref{tbu2} both imply explosion in terms of the expectation of the  $L^q-$norm for any $q\geq 1$ as well. Indeed, since $\phi_1$ is bounded and continuous on $\ol{D}$ by applying H\"{o}lder's inequality for each $q\geq 1$ we derive
\bgee
\Psi(t)\leq C_q\, \mathbb{E}\left[ \left(\int_{D} |u|^q \, dx\right)^{1/q}\right],
\egee
for $C_q=\left(\int_{D} |\phi_1|^r \, dx\right)^{1/r}$ with $r=q/(q-1),$ which actually yields that
the expectation of the $L^q-$norm explodes in finite time $T_q\leq T^{*}.$
\end{rem}
\subsection{An estimate of the probability of blow-up}\label{ebt}
In the current subsection  we consider the following
\bge
&&\frac{\pl u}{\pl t}=\Delta u+F(u)+\kappa u\; d \beta(t),\; (x,t)\in D_T,\quad\label{seq1}\\
&& u(x,t)=0,\quad (x,t)\in \Gamma_T,\quad\label{seq2}\\
&& u(x,0)=\xi(x),\quad x\in D\label{seq3},
\ege
where now $\{\beta(t),\;t\geq 0\}$ stands for a standard one-dimensional Brownian motion and $\kappa$ is positive constant. Now, for sake of simplicity we fix the parameter $\lambda=1$ and thus
\bgee
 F(u)= \frac{f(u)}{\Big ( \int_{D} f(u) \, dx \Big )^{q}}, \quad 0<q<1.
\egee
The domain  $D\subset \IR^d,\; d\geq 1,$ is still assumed to be convex as well as $\Gamma_T$ is smooth enough so that it has the inside strong sphere property whereas the nonlinearity $f(s)$ satisfies \eqref{cond1} and \eqref{cond2}. Thus,  it is easily seen that the  above problem satisfies the assumptions  of Lemma \ref{moving} and thus estimate \eqref{e4_2} is still valid for its solution.

Next we show that the solution $u$ of \eqref{seq1}-\eqref{seq3} exhibits a finite-time blow-up, induced again by the non-local term, in the sense
$$\limsup_{t\to T} ||u(\cdot,t)||_{\infty}=\infty,$$
and thus under these circumstances a  stronger type rather than blow-up in mean $L^p-$norm takes place.

For that purpose we employ a different technique than the one in subsection \ref{bnlt},which also  provides an upper estimate of the probability of blow-up. To this end, we first introduce the auxiliary random function
\bge\label{mk1}
v(x,t)=e^{-\kappa \beta(t)}\, u(x,t),\quad (x,t)\in D_T,
\ege
and we follow closely the approach introduced in \cite{dlm10}. In order, to make  our paper  self-contained, we present all the required steps in every detail.

By virtue of It\^{o}'s Lemma, see \cite[Proposition 1]{dlm10}, it can be shown that $v(x,t)$ satisfies the following random PDE problem
\bge
&&\frac{\pl v}{\pl t}=\Delta v-\frac{\kappa^2}{2}v+e^{-\kappa \beta(t)}F\left(e^{\kappa \beta(t)}v\right),\quad (x,t)\in D_T,\quad\label{req1}\\
&& v(x,t)=0,\quad (x,t)\in \Gamma_T,\quad\label{req2}\\
&& v(x,0)=\xi(x),\quad x\in D\label{req3}.
\ege
Notably,  \eqref{req1}-\eqref{req3} should be understood trajectorywise and classical results such as existence, uniqueness and positivity of its a solution up to eventual blow-up can be found in \cite[Theorem 9, Chapter 7]{fri}. Note that the solution $u$ of \eqref{seq1}-\eqref{seq3} blows up in finite time as long as the solution $v$ of \eqref{req1}-\eqref{req3} does so and due to \eqref{mk1} both of them blow up simultaneously.

Set
\bgee
 (u(t),\phi_1):=\int_{D} u(x,t)\, \phi_{1}(x) \, dx,
\egee
where again $\phi_1$ stands for the first Dirichlet eigenfunction of $-\Delta$ with corresponding eigenvalue $\lambda_1>0,$ then by Definition \ref{weak} we have
\bge\label{gv2}
(u(t), \phi_1)
=
(\xi, \phi_1) + \int_{0}^{t} \Big [
(u(s), \Delta \phi_1) + (F(u(s)), \phi_1) \Big ] \, ds
+
\kappa\int_{0}^{t} \left( u(s), \phi_1 \right )\, d \beta(s).\quad
\ege
Next  by virtue of \eqref{mk1} and It\^o's formula  we derive 
\bgee
e^{-\kappa \beta(t)}=1-k\int_0^t e^{-\kappa \beta(s)}\, d\beta(s)+\frac{k^2}{2}\int_0^t  e^{-\kappa \beta(s)}\,ds
\egee
or equivalently in differential form
\bge\label{gav1}
d\left(e^{-\kappa \beta(t)}\right)=-ke^{-\kappa \beta(t)} d\beta(t)+\frac{k^2}{2} e^{-\kappa \beta(t)}
\ege
Applying now integration by parts formula, see \cite[Corollary 7.11, p. 119]{mac},
\bgee
\widetilde{v}(t):=(v, \phi_1)
&&=
(\xi, \phi_1) + \int_{0}^{t} e^{-\kappa \beta(s)}d (u(s), \phi_1) \\
&&+\int_{0}^{t} \left( u(s), \phi_1 \right )\,d\left(e^{-\kappa \beta(t)}\right)+ \left[e^{-\kappa \beta(t)}, (u(t),\phi_1) \right],
\egee
where the last term in the preceding relation is called quadratic variation and is defined as
\bgee
\left[e^{-\kappa \beta(t)}, (u(t),\phi_1)\right]:=-\int_0^t \kappa^2 e^{-\kappa \beta(s)} (u(s),\phi_1)\,ds,\quad\mbox{for}\quad 0<t<T,
\egee
see also \cite[Definition 7.6]{mac}.

Taking now into account \eqref{gv2} we finally get
\bgee
&&\widetilde{v}(t):=(v, \phi_1)=
(\xi, \phi_1) + \int_{0}^{t} e^{-\kappa \beta(s)}\Big [
(u(s), \Delta \phi_1) + (F(u(s)), \phi_1) \Big ] \, ds\\
&&+\kappa\int_0^t e^{-\kappa \beta(s)}(u(s), \phi_1)\,d\beta(s)+\int_{0}^{t} \left( u(s), \phi_1 \right )\, \left(-\kappa e^{-\kappa \beta(s)} d \beta(s)+\frac{\kappa^2}{2} e^{-\kappa \beta(s)}\,ds \right)\\
&&+ \left[e^{-\kappa \beta(t)}, (u(t),\phi_1) \right],
\egee
Consequently  by virtue of \eqref{mk1} we have
\bgee
(v(t), \phi_1)=
(\xi, \phi_1) + \int_{0}^{t} \left[
-\lambda_1(v(t), \phi_1) + e^{-\kappa \beta(s)}\left(F\left(e^{\kappa \beta(s)}v(s)\right), \phi_1\right)-\frac{\kappa^2}{2} (v(s),\phi_1) \right]\, ds
\egee
which by differentiation with respect to time gives
\bgee
\frac{d \widetilde{v}(t)}{dt}=-\left(\lambda_1+\frac{\kappa^2}{2}\right) \widetilde{v}(t)+ e^{-\kappa \beta (t)}\left(F\left(e^{\kappa \beta(t)}v(t)\right),\phi_1\right),
\egee
and by virtue of \eqref{e4_2} entails
\bge\label{ps1}
\frac{d \widetilde{v}(t)}{dt}\geq
-\left(\lambda_1+\frac{\kappa^2}{2}\right) \widetilde{v}(t) +  L\, e^{-\kappa \beta(t)}\left(f\left(e^{\kappa \beta(t)}v(t)\right), \phi_1\right)^{1-q}.
\ege
Assuming now that the nonilearity $f(s)$ satisfies the growth condition
\bge\label{gc1}
f(s)\geq L^{\frac{1}{(q-1)}} s^{\frac{1+\varepsilon}{1-q}}\quad\mbox{for all}\quad s>0\quad\mbox{and some}\quad \varepsilon >0,
\ege
then using Jensen's inequality \eqref{ps1} we have
\bge\label{ps2}
\frac{d \widetilde{v}(t)}{dt}\geq
-\left(\lambda_1+\frac{\kappa^2}{2}\right) \widetilde{v} (t)+  e^{\varepsilon \kappa \beta(t)}( \widetilde{v}(t))^{1+\varepsilon}, \quad 0<t<T.
\ege
Comparing now the solution of \eqref{ps2} with the solution of the following Bernoulli's type initial value problem
\begin{equation*}
\frac{d \mathcal{Y}(t)}{dt}=-\left( \lambda_1 +\frac{\kappa ^{2}}{2}\right)
\mathcal{Y}(t)\,+ e^{\varepsilon\kappa \beta (t) } (\mathcal{Y}(t))^{1+\varepsilon }\,,\text{ \ }\mathcal{Y}(0)=(\xi,\phi_1):=\mathcal{\xi}_0,
\end{equation*}
which is given by
\begin{equation*}
\mathcal{Y}(t)\ = \ e^{-(\lambda_1 +\frac{\kappa^2}{2})t}\left[ \xi_1^{-\varepsilon }-\varepsilon
\int_{0}^{t}e^{-(\lambda_1 +\frac{\kappa^2}{2})\varepsilon s+\varepsilon\kappa \beta (s)}\,ds
\right] ^{-\frac{1}{\varepsilon }},\quad 0\leq t<T_{\mathcal{Y}} ,
\end{equation*}
we get that
\bge\label{compp}
\widetilde{v}(t)\geq \mathcal{Y}(t)\quad\mbox{for all}\quad 0\leq t\leq \min\{T,T_{\mathcal{Y}} \},
\ege
where
\begin{equation}  \label{tautau}
T_{\mathcal{Y}} :=\inf \left\{ t\geq 0\left|\ \int_{0}^{t}e^{-(\lambda_1 +\frac{\kappa^2}{2})\varepsilon s+\varepsilon\kappa \beta (s)}\,ds \geq \frac{1}{\varepsilon }\xi_1^{-\varepsilon }\right.\right\},
\end{equation}
denotes the maximum existence time of $\mathcal{Y}(t).$ Note that $\mathcal{Y}(t)$ exhibits finite-time blow-up in the event $\{T_{\mathcal{Y}}<\infty\}$ and due to \eqref{compp} the function
\bgee
t\longmapsto\int_D u(x,t)\phi_1(x)\,dx,
\egee
explodes in finite time on the event  $\{T_{\mathcal{Y}}<\infty\}.$ Furthermore, $T_{\mathcal{Y}}$ is an upper bound of the blow-up time of $\widetilde{v}(t)$ and since
\bgee
\widetilde{v}(t)=\int_{D} v(x,t)\,\phi_1(x)\,dx\leq ||v(\cdot,t)||_{\infty},
\egee
it is also an upper bound of blow-up times for $v$ and $u.$ We are now ready to provide a lower bound of the probability of blow-up  for $v$ and $u.$ First, by \eqref{tautau} we have
\bge
P[T_{\mathcal{Y}} =+\infty ] &=&P\left[ \int_{0}^{t}\exp \left(-\left(\lambda_1 +\frac{\kappa
^{2}}{2}\right)\varepsilon s+\kappa \varepsilon \beta(s)\right)\,ds<\frac{1}{\varepsilon }\xi_1^{-\varepsilon }
\text{ for all }t>0\right]  \no\\
&=&P\left[\int_{0}^{\infty}\exp \left(-\left(\lambda_1 +\frac{\kappa
^{2}}{2}\right)\varepsilon s+\kappa \varepsilon  \beta(s) \right)\,ds\leq \frac{1}{\varepsilon }\xi_1^{-\varepsilon }\right]\no \\
&=&P\left[ \int_{0}^{\infty }\exp (2\hat{\varepsilon } \beta(s)^{(\ell )})\,ds\leq
\frac{1}{\varepsilon }\xi_1^{-\varepsilon }\right] \label{mk2},
\ege
where $ \beta(s)^{(\ell )}:=\ell s+ \beta(s)$, $\ell :=-\frac{(\lambda_1 +\frac{\kappa
^{2}}{2})} {\kappa}$, and $\hat{\varepsilon}:=\frac{\kappa \varepsilon} {2}$. Using the new time scale $s\mapsto s\cdot \hat{\varepsilon}^2$ we finally get
\begin{equation}  \label{doubt}
P[T_{\mathcal{Y}}=+\infty ]=P\left[ \frac{4}{\kappa ^{2}\varepsilon ^{2}}\int_{0}^{\infty
}\exp (2\beta(s)^{(\widehat{\ell })})\,ds\leq\frac{1}{\varepsilon }\xi_1^{-\varepsilon }\right] ,
\end{equation}
where $\hat{\ell}=\frac{\ell}{\hat{\varepsilon}}.$ The distribution of the integral term in \eqref{doubt} can be identified by either using some formulas in \cite{BS02, Duf, Yor} or otherwise by following the approach in \cite{pp} and therefore we obtain
\begin{equation*}
\int_{0}^{\infty }\exp (2 \beta(s)^{(\hat{\ell })})\,ds\ =\ \frac{1}{ 2Z_{-
\hat{\ell }}},
\end{equation*}
where the above relation should be understood in distributional sense. Here $Z_{-\hat{\ell }}$ is a random variable following the law  $$P(Z_{-\hat{\ell }}\in dy)=\frac{1}{
\Gamma (-\hat{\ell }) }e^{-y}y^{-\hat{\ell }-1}dy,$$
where $\Gamma (\cdot)$ stands for the standard $\Gamma-$ function, cf. \cite{AS}. 

Consequently by \eqref{doubt}
\begin{equation*}
P[T_{\mathcal{Y}}=+\infty  ]=\int_{0}^{\frac{1}{\varepsilon }\xi_1^{-\varepsilon }}h(y)dy,
\end{equation*}
where
\begin{equation*}
h(y)\ =\ \frac{(\kappa ^{2}\varepsilon ^{2}y/2)^{(2\lambda_1 +\kappa ^{2})/\kappa
^{2}\varepsilon }}{y\Gamma ((2\lambda_1 +\kappa ^{2})/(\kappa ^{2}\varepsilon ))}\exp
\left( -\frac{2}{\kappa ^{2}\varepsilon ^{2}y}\right),
\end{equation*}
see also  \cite[formula 1.10.4(1)]{BS02}, and thus
\bge\label{ik1}
P[T_{\mathcal{Y}}<+\infty  ]=1-P[T_{\mathcal{Y}}=+\infty  ]=\int_{\frac{1}{\varepsilon }\xi_1^{-\varepsilon
}}^{+\infty }h(y)\,dy.
\ege
In this manner we have shown the following
\begin{thm}
Let the domain $D\in \IR^d,\; d\geq 1,$ be convex and smooth enough as in Theorem \ref{max2}. Assume further that the nonlinearity $f(s)$ satisfies the growth condition \eqref{gc1}. Then the probability that the solution of \eqref{seq1}-\eqref{seq3} exhibits finite-time blow-up is lower bounded by the quantity $\int_{\frac{1}{\varepsilon }\xi_1^{-\varepsilon
}}^{+\infty }h(y)\,dy.$
\end{thm}
\begin{rem}
Note that relation \eqref{ik1} guarantees that the solution of \eqref{seq1}-\eqref{seq3} blows up almost surely once a big enough initial datum $\xi(x)$ is considered, i.e. once $$\xi_1=\int_{D} \xi(x)\,\phi_1(x)\gg 1,$$
which is in agreement with the result of Theorem \ref{tbu2}.
\end{rem}
\begin{rem}In case we consider a non-local term of the form
\bgee
 F(u)= \frac{\lambda f(u)}{\Big ( \int_{D} f(u) \, dx \Big )^{q}}, \quad 0<q<1,
\egee
then
\bgee
T_{\mathcal{Y}} :=\inf \left\{ t\geq 0\left|\ \int_{0}^{t}e^{-(\lambda_1 +\frac{\kappa^2}{2})\varepsilon s+\varepsilon\kappa W_{s}}\,ds \geq \frac{1}{\lambda \varepsilon }\xi_1^{-\varepsilon }\right.\right\}
\egee
and thus
\bgee
P[T_{\mathcal{Y}}<+\infty  ]=\int_{\frac{1}{\lambda \varepsilon }\xi_1^{-\varepsilon
}}^{+\infty }h(y)\,dy.
\egee
Therefore the bigger the value of the control parameter $\lambda$ is then the more probable the solution of \eqref{seq1}-\eqref{seq3} to exhibit finite-time blow-up becomes; the latter is also consistent with the result of Theorem \ref{tbu1}.
\end{rem}
\begin{rem}
Note that for $\kappa=0,$ when problem \eqref{seq1}-\eqref{seq3} becomes deterministic and thus $u=v,$ then by \eqref{mk2} we derive that $P[T_{\mathcal{Y}}=+\infty  ]=0$ provided that $\xi_1>\lambda_1^{1/\varepsilon},$ recovering the probabilistic counterpart of \cite[Theorem 2]{kavtza}.
\end{rem}
\begin{rem}
A global-in-time existence result for problem \eqref{seq1}-\eqref{seq3} can be derived following the same lines as in \cite[Theorem 5]{dlm10}  once the non-linearity $f(s)$ is strictly positive, increasing  and satisfies a growth condition of the form
\bgee
f(s)\leq C s^{1+\varepsilon}\quad\mbox{for all}\quad s>0\quad\mbox{and some}\quad C,\varepsilon >0,
\egee
since then
\bgee
 F(u)= \frac{f(u)}{\Big ( \int_{D} f(u) \, dx \Big )^{q}}\leq \widetilde{C} u^{1+\varepsilon}\quad\mbox{for all}\quad u>0,
\egee
for $\widetilde{C}=\frac{C}{(|D| f(0))^q}.$ In that case, a lower bound of the maximum existence time $T>0$ for the solution $u$ of \eqref{seq1}-\eqref{seq3} can be also derived, see for example \cite[Theorem 5]{dlm10}.
\end{rem}
\subsection*{Acknowledgement} The authors would like to thank the anonymous referees for their valuable comments which improved substantially the current manuscript. They would like also to thank Dr. M. Hofmanov\'{a} for letting them  know about the paper \cite{debdehof} as well as   Prof. S. Larsson for his stimulating comments which helped improving the presentation of some of the presented results. Finally, special thanks go to Dr. Joe Gildea for helping the authors with the construction of  Fig .~\ref{fig1} and  Fig.~\ref{fig2}.



\begin{thebibliography}{10}

\bibitem{AS} {\sc M. Abramowitz and I.A. Stegun}, Handbook of mathematical functions with formulas, graphs, and
mathematical tables, volume 55. Dover Publications, New York, 1972. 9th Edition.

\bibitem{akh11} {\sc M. Al-Refai,   N.I. Kavallaris \& M.A. Hajji},  \textit{Monotone iterative sequences for non-local elliptic problems}, European J. Appl. Math. {\bf 22} (2011), 533–-552.


\bibitem{bc} {\sc P. Baras \& L. Cohen}, \textit{Complete blow-up after $T_{max}$ for the solution of a semilinear heat equation}, J. Funct. Analysis, {\bf 71} (1987), 142--174.

\bibitem{bl1} {\sc J. W. Bebernes \& A. A. Lacey} {\em Global existence and finite-time blow-up for a class of nonlocal parabolic problems}, Adv. Differential Equations, {\bf 2} (1997), 927--953.

\bibitem{bt1}  {\sc J. W. Bebernes \& P. Talaga}, {\em Non-local problems modelling shear
banding}, Comm. Appl. Nonlinear Anal. {\bf 3} (1996), 79--103.

\bibitem{bt2} {\sc J. W. Bebernes, C. Li  \&  P. Talaga}, {\em Single-point blow-up for non-local parabolic problems}, Physica D {\bf 134} (1999), 48--60.

\bibitem{bl} {\sc D. Bl\"{o}mker}, {\em Amplitude Equations for Stochastic Partial Differential Equations}, Interdisciplinary Mathematical Sciences-Vol. 3, World Scientific Publishing Co. Inc.,  2007.

\bibitem{BS02} {\sc  A.N. Borodin, \& P. Salminen},
{\em Handbook of Brownian Motion---Facts and  Formulae.}  Second edition.
Probability and its Applications. Birkh\"{a}user Verlag, Basel, 2002.

\bibitem{brz}
{\sc   Z. Brze\'zniak}, {\it   On Stochastic convolution in Banach spaces and applications},  Stochastics and Stochastic Reports {\bf 61} (1997), 245-295.

\bibitem{cpt}
{\sc   M. D. Chekroun, E. Park \& R. Temam}, {\it The Stampacchia maximum principle for stochastic partial differential equations and applications}, J. Diff. Equations, {\bf 260} (2016), 2926--2972.

\bibitem{cho}
{\sc   P-L Chow}, {\it   Stochastic Partial Differential Equations}, Second Edition, Chapman and Hall/CRC,  2015.

\bibitem{cho1}
{\sc   P-L Chow}, {\em Explosive solutions of stochastic reaction-diffusion equations in mean $L^p-$norm}, J. Diff. Equations, {\bf 250} (2011), 2567--2580.

\bibitem{cho2}
{\sc   P-L Chow}, {\em Unbounded positive solutions of nonlinear parabolic It$\hat{o}$ equations}, Comm. Stoch. Anal. {\bf 3}  (2009), 211--222.

\bibitem{kho}
{\sc D. Conus, M. Joseph \& D. Khoshnevisan  }, {\em Correlation-length bounds, and estimates for
intermittent islands in parabolic SPDEs}, Electron. J. Probab. \textbf{17 }  (2012),  1--15.
ISSN: 1083-6489 DOI: 10.1214/EJP.v17-2429.


\bibitem{dazab}
{\sc G. Da Prato  \&  J. Zabczyk}, {\em    Stochastic Equations in Infinite Dimensions}, Cambridge University Press, Cambridge, 1992.

\bibitem{d89}
{\sc E.B. Davies}, {\em Heat Kernels and Spectral Theory}, Cambridge University Press, 1989.

\bibitem{debdehof}
{\sc A. Debussche,   S. De Moor  \&   M. Hofmanova}, {\em   A regularity result for quasilinear stochastic partial differential equations of parabolic type},  SIAM J. Math. Anal.,  {\bf 47} (2015), 1590–-1614.

\bibitem{dms1}
{\sc L.  Denis,  A. Matoussi  \&  L. Stoica}, {\em  Maximum principle for quasi-linear SPDE's on a bounded domain without regularity assumptions}, Stoch. Proc. Appl., Vol 123, (2013), 1104--1137.


\bibitem{Duf} {\sc  D. Dufresne},  {\it The distribution of a perpetuity, with
applications to risk theory and pension funding}, Scand. Actuar. J.
\textbf{9}  (1990), 39--79.

\bibitem{dlm10}
{\sc M. Dozzi \& J.A. L\'{o}pez-Mimbela}, {\em Finite-time blow-up and existence of global positive solutions of a semi-linear SPDE}, Stoch. Process. Applications {\bf 120} (2010), 767--776.


\bibitem{fuj66}
{\sc H. Fujita}, {\it On the blowing up of solutions of the Cauchy problem for $u(t) = \Delta u+u^{1+\alpha}$}, J. Fac. Sci. Univ. Tokyo, Sec. IA {\bf 13} (1966),  109--124.

\bibitem{fuj69}
{\sc H. Fujita}, {\it On the nonlinear equations $\Delta u+e^u=0$ and $\partial v/\partial t=\Delta v+e^v$}, Bull. Amer. Math. Soc. {\bf 75} (1969),  132--135.

\bibitem{fln} {\sc M. Foondun, W. Liu \& E. Nane}, {\em Some non-existence results for a class of stochastic partial differential equations} J. Diff. Equations, {\bf 266} (2019), 2575--2596.

\bibitem{fri}
{\sc  A. Friedman}, {\em  Partial Differential Equations of Parabolic Type}, 1983, Prentice-Hall Inc.

\bibitem{gnn} {\sc B. Gidas, W-M. Ni \& L. Nirenberg}, {\em Symmetry and related properties via maximum principle}, Comm. Math. Phys. {\textbf 68} (1979), 209--243.

\bibitem{h00} {\sc H. Hinrichsen}, {\em Non-equilibrium critical phenomena and phase transitions into absorbing states}, Adv. Phys. {\textbf 49 } (2000), 815--958.

\bibitem{hof}
{\sc M. Hofanova}, {\em Strong solutions of semilinear stochastic partial differential equations}, Nonlinear Differ.  Equ. Appl. {\bf 20} (2013), 757--778.




\bibitem{kaplan}
{\sc S. Kaplan}, {\em On the growth of solutions of quasilinear parabolic equations,}  Comm.~Pure Appl.~Math {\bf 16} (1963), 327--330.

\bibitem{k15}
{\sc N.I. Kavallaris}, {\em Explosive solutions of a stochastic non-local reaction-diffusion equation arising in shear band formation, }  Math. Meth. Appl. Sciences {\textbf 38} (2015),  3564--3574.

\bibitem{k16}
{\sc N.I. Kavallaris}, {\em Quenching solutions of a stochastic parabolic problem arising in electrostatic MEMS control}, Math. Meth. Appl. Sciences, {\textbf 41} (2018), 1074--1082.

\bibitem{ktbn}
{\sc  N. I. Kavallaris  \&  D. E. Tzanetis}, {\em    Behaviour of a non-local reactive-convective problem with variable velocity in Ohmic heating of food},  Nonlocal elliptic and parabolic problems, Banach Center Publ. {\bf 66}  (2004), 189--198.

\bibitem{kavtza}
{\sc  N. I. Kavallaris  \&  D. E. Tzanetis}, {\em    On the blow-up of a non-local parabolic problem}, Appl. Math. Letters {\bf 19} (2006), 921--925.

\bibitem{KN07} {\sc N.I. Kavallaris \& T. Nadzieja},{\em On the blow-up of the non-local thermistor problem},  Proc. Edinburgh. Math. Soc. \textbf{50}  (2007), 389--409.

\bibitem{ks}
{\sc N.I. Kavallaris \&  T. Suzuki}, {\em On the finite-time blow-up of a non-local parabolic equation describing chemotaxis}, Diff. Int. Equations {\bf 20}  (2007), 293--308.

\bibitem{KS18}
{\sc N.I. Kavallaris \& T. Suzuki}, {\em  Non-Local Partial Differential Equations for Engineering and Biology:
Mathematical Modeling and Analysis}, Mathematics for Industry Vol. 31 Springer Nature 2018.


\bibitem{kn} {\sc A. Krzywicki \&  T. Nadzieja}, {\em Some results concerning
the Poisson--Boltzmann equation}, Zastosowania Mat. (Appl. Math. (Warsaw))
{\bf 21} (1991), 265--272.


\bibitem{l1} {\sc A. A. Lacey}, {\em Thermal runaway in a non--local problem
modelling Ohmic heating. Part I: Model derivation and some special
cases}, Euro. J. Appl. Math. {\bf 6} (1995), 127--144.

\bibitem{l2} {\sc A. A. Lacey}, {\em Thermal runaway in a non--local problem
modelling Ohmic heating. Part II: General proof of blow--up and
asymptotics of runaway}, Euro. J. Appl. Math. {\bf 6} (1995),
201--224.

\bibitem{lactza}
{\sc  A.A. Lacey \&  D.E. Tzanetis}, {\em  Global unbounded solutions to a parabolic equation}, J. Diff. Equations, {\bf 101}  (1993), 80–-102.

\bibitem{ladsolura}
{\sc   O.A.Ladyzhenskaya, V. A. Solonnikov \&  N.N. Ural'ceva}, {\em    Linear and Quasilinear Equations of Parabolic Type},  Translations of Mathematical Monographs 23, Amer. Math. Soc., Providence, R. I. , 1968.

\bibitem{lb}
{\sc G.M. Lieberman}, {\em Second Order Parabolic Differential Equations}, World Scientific Publishing Co. Inc., River Edge, NJ, 1996.

\bibitem{lor}
{\sc G. J. Lord, C.  E. Powell \& T.  Shardlow},  {\it An Introduction to Computational Stochastic PDEs},   Cambridge University Press, Cambridge, UK, 2014.

\bibitem{ld} {\sc G. Lv \& J. Duan, }  {\it Impacts of noise on a class of partial differential equations}, J. Diff. Equations, {\bf 258} (2015), 2196--2220.

\bibitem{mac}
{\sc V. Mackevi\v{c}ius}, {\it Introduction to Stochastic Analysis: Integrals and Differential Equations}, Wiley 2011.

\bibitem{mue} 
{\sc C. Mueller}, {\it Some tools and results for parabolic stochastic partial differential equations.  A minicourse on stochastic partial differential equations}, pp. 111--144, Lecture Notes in Math., 1962, Springer, Berlin, 2009.

\bibitem{neezhu} 
{\sc J. van Neerven \&  J. Zhu}, {\it  A maximal inequality for stochastic convolutions in 2-smooth Banach spaces},    Electr. Comm. Probability {\bf 16} (2011), 689--705.

\bibitem{nst} {\sc W-M. Ni, P. Sacks \& J. Tavantzis,}  {\it
On the asymptotic behavior of solutions of certain quasilinear parabolic equations}, J. Diff. Equations {\bf 35 } (1980), 45--54.

\bibitem{pp} {\sc C. Pintoux \& N. Privault}, {\it A direct solution to the Fokker-Plank equation for exponential Brownian functionals}, Anal. Appl. (Singapore) {\bf 8} (2010), 287--304.

\bibitem{pw}   {\sc M. H. Protter \& H. F. Weinberger}, {\it Maximum Principles in Differential Equations}, 1984, Springer-Verlag.

\bibitem{ssg07} {\sc F. Sagu\'{e}s, J. M. Sancho \& J. Garc\'{i}a-Ojalvo}, {\it Spatio--temporal order out of noise}, Rev. Modern Phys.  {\textbf 79} (2007), 829--882.

\bibitem{smo}
{\sc J. Smoller}, {\em Shock Waves and Reaction-Diffusion Equations}, 2nd Edition 1983, Springer-Verlag.

\bibitem{tz} {\sc D. E. Tzanetis}, {\em Blow-up of radially symmetric solutions of a non-local
problem modelling ohmic heating}, Electron. J. Diff. Eqns. {\bf 11} (2002),
1--26.

\bibitem{tri}
{\sc H. Triebel}, {\em  Interpolation Theory, Function Spaces, Differential Operators}, 2nd Edition 1995,  Johann Ambrosius Barth, Heidelberg.

\bibitem{Yor} {\sc M. Yor},  {\em On some exponential functionals of Brownian motion},   Adv. Appl. Probab. {\bf 24} (1992), 509--531.

\bibitem{w} {\sc G. Wolansky}, {\em A critical parabolic estimate and application to non-local
equations arising in chemotaxis}, Appl. Anal. {\bf 66} (1997),
291--321.


\end{thebibliography}

\end{document}